\documentclass[a4paper]{amsart}

\usepackage[utf8]{inputenc}
\usepackage{newunicodechar}
\usepackage{lmodern}
\usepackage{textcomp}
\usepackage{hyperref}
\usepackage{xfrac}
\usepackage{tikz}
\usetikzlibrary{matrix,arrows,decorations.pathmorphing}
\usepackage{enumitem}
\usepackage{ dsfont }
\usepackage{ amssymb }

\newtheorem{satz}{Satz}
\newtheorem{thm}[satz]{Theorem}
\newtheorem{lem}[satz]{Lemma}
\newtheorem{cor}[satz]{Corollary}

\newtheorem{defi}[satz]{Definition}

\newtheorem{notation}[satz]{Notation}

\renewcommand{\O}{\mathcal{O}}

\renewcommand{\tilde}{\widetilde}
\renewcommand{\H}{\textup{H}}

%Correction of the symbol :=
\newcommand*{\defeq}{\mathrel{\vcenter{\baselineskip0.5ex \lineskiplimit0pt
			\hbox{\scriptsize.}\hbox{\scriptsize.}}}%
	=}

%Restriction symbol with a longer vertical bar
\newcommand\restr[2]{
	{\left.\kern-\nulldelimiterspace#1\vphantom{\big|}\right|_{#2}}
}

\newcommand{\Z}{\ensuremath{\mathds{Z}}}

\newcommand{\C}{\ensuremath{\mathds{C}}}
\newcommand{\Nn}{\ensuremath{\mathds{N}_{0}}}

\newcommand{\fL}{\ensuremath{\mathcal{L}}}
\newcommand{\fM}{\ensuremath{\mathcal{M}}}

\newcommand{\fU}{\ensuremath{\mathcal{U}}}
\newcommand{\fV}{\ensuremath{\mathcal{V}}}

\DeclareMathOperator{\ra}{\rightarrow}
\DeclareMathOperator{\lra}{\longrightarrow}

\DeclareMathOperator{\Ra}{\Rightarrow}

\DeclareMathOperator{\img}{img}

\DeclareMathOperator{\Pic}{Pic}
\DeclareMathOperator{\tor}{tor}

%Correction of enumerate
\let\enumerateO\enumerate
\let\endenumerateO\endenumerate
\renewenvironment{enumerate}{\enumerateO\setlength{\parskip}{0em}\setlength{\parindent}{1em}}{\endenumerateO}

\sloppy

\title{Low degree Hodge theory for klt varieties}
\date{\today}

\author{Martin Schwald}
\address{Martin Schwald, Mathematisches Institut, Albert-Ludwigs-Universität Freiburg, Eckerstraße 1, 79104 Freiburg im Breisgau, Germany}
\email{\href{mailto:martin.schwald@math.uni-freiburg.de}{martin.schwald@math.uni-freiburg.de}} \urladdr{\url{http://home.mathematik.uni-freiburg.de/schwald}}
\date{\today}

\thanks{The author gratefully acknowledges support by the DFG-Graduiertenkolleg GK1821
  ``Cohomological Methods in Geometry'' at the University of Freiburg}

\begin{document}

\begin{abstract}
	If $X$ is a complex projective variety with klt singularities, then the mixed Hodge structures on the first two singular cohomology groups are pure. We describe the pieces of the Hodge decomposition in terms of reflexive differential forms. Applications include a Lefschetz $(1,1)$ Theorem and a weak analogue of the Hodge-Riemann bilinear relations for klt varieties.
\end{abstract}

\maketitle

\tableofcontents

\section{Introduction}

\subsection{Main results} For a singular, complex projective variety $X$, Deligne constructed mixed Hodge structures on the singular cohomology groups $\H^p(X,\,\C)$, \cite{DeligneHodgeII}. Kirschner showed that the Hodge structure on $\H^2(X,\,\C)$ is pure when $X$ has rational singularities, \cite[Corollary~B.2.8]{Kir15}. When $X$ is klt, we describe the relevant pieces of the Hodge decomposition in terms of reflexive differential forms $\Omega^{[p]}_X$, the definition of which we recall in \ref{forms}.

\begin{thm}[Structure of $\H^1(X,\,\C)$ and $\H^2(X,\,\C)$, see Theorem~\ref{hdec} for details]\label{mainthm}
  If $X$ is a complex projective variety with at most klt
  singularities, then for $p=1,2$ the Hodge structure $\H^p(X,\,\C)$ is pure of
  weight $p$ and there are canonical isomorphisms
  $$
  \kappa_{p0} : \H^{p,0}(X) \to \H^0(X,\,\Omega^{[p]}_X)  \quad\text{and}\quad \kappa_{0p} : \H^{0,p}(X) \to \H^p(X,\,\O_X).
  $$
\end{thm}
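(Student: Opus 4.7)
The plan is to pass to a log resolution $\pi\colon\tilde X\to X$ and to build the proof on two properties specific to klt singularities: (a) klt singularities are rational, so $\O_X\simeq R\pi_*\O_{\tilde X}$ in the derived category; and (b) the extension theorem for reflexive differentials on klt varieties yields $\pi_*\Omega^p_{\tilde X}=\Omega^{[p]}_X$ for every $p$. The overall strategy is to pull back along $\pi$, invoke classical Hodge theory on the smooth projective variety $\tilde X$, and transport the decomposition back to $X$ using (a) and (b).

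First I would address purity. For $\H^2(X,\C)$ this is Kirschner's theorem, which applies since klt implies rational. For $\H^1(X,\C)$ purity is easier: normality of $X$ gives $\pi_*\C_{\tilde X}=\C_X$, so the edge map of the Leray spectral sequence produces an injection $\pi^*\colon\H^1(X,\C)\hookrightarrow\H^1(\tilde X,\C)$ of mixed Hodge structures; since the target is pure of weight one, so is the source. The same edge-map argument gives injectivity of $\pi^*\colon\H^2(X,\C)\hookrightarrow\H^2(\tilde X,\C)$, which I will use below.

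For $\kappa_{0p}$, applying $R\Gamma$ to $\O_X\simeq R\pi_*\O_{\tilde X}$ gives $\H^p(X,\O_X)\cong\H^p(\tilde X,\O_{\tilde X})=\H^{0,p}(\tilde X)$. Combining this with the injection of Hodge structures $\pi^*$, the bottom piece $\H^{0,p}(X)$ maps isomorphically onto $\H^{0,p}(\tilde X)\cong\H^p(X,\O_X)$. For $\kappa_{p0}$, Deligne's construction (for $p\le 2$) realises the top piece $F^p\H^p(X,\C)=\H^{p,0}(X)$ by holomorphic $p$-forms coming from $\tilde X$; pullback then lands in $\H^0(\tilde X,\Omega^p_{\tilde X})$, which by (b) equals $\H^0(X,\Omega^{[p]}_X)$. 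Surjectivity is the converse direction: any reflexive section extends via (b) to a global holomorphic $p$-form on $\tilde X$, which on a compact K\"ahler manifold represents a class in $\H^{p,0}(\tilde X)$; the remaining task is to check that this class lies in the image of $\pi^*$, for which a dimension count using purity suffices.

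The main obstacle is to make the compatibility between Deligne's abstract mixed Hodge structure on $\H^p(X,\C)$, constructed via a cubical hyperresolution and the associated filtered de Rham complex, and the concrete realisation by pullback along $\pi$ into a clean statement. Concretely, one must check that $\pi^*$ identifies the Deligne pieces $F^p\H^p(X,\C)$ and $\operatorname{gr}_F^0\H^p(X,\C)$ with the appropriate subspaces of $\H^p(\tilde X,\C)$, and that these are in turn pinned down by reflexive forms on $X$ and by $\O_X$-cohomology, respectively. Granting purity, this should reduce via the Hodge--de Rham spectral sequence on $\tilde X$, together with (a) and (b), to verifying that the abstract Hodge filtration on $X$ is intertwined by $\pi^*$ with the classical one on $\tilde X$. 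Once this identification is nailed down for a convenient model of Deligne's filtration, I expect the isomorphisms to follow directly from the two pillars above.
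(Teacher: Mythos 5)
There is a genuine gap, and it sits exactly where the paper has to work hardest. Your two pillars---rationality and the extension theorem---give you injectivity of $\pi^*$ on $\H^1$ and $\H^2$ (note that for $\H^2$ the edge-map argument already needs $R^1\pi_*\C=0$, which follows from rationality via the exponential sequence, not merely from $\pi_*\C_{\tilde X}=\C_X$), the identification $\H^p(X,\O_X)\cong\H^p(\tilde X,\O_{\tilde X})$, and the identification $\H^0(X,\Omega^{[p]}_X)\cong\H^0(\tilde X,\Omega^p_{\tilde X})$. What they do \emph{not} give is the surjectivity of $\H^{2,0}(X)\to\H^{2,0}(\tilde X)$ and $\H^{0,2}(X)\to\H^{0,2}(\tilde X)$, on which both of your isomorphisms $\kappa_{20}$ and $\kappa_{02}$ depend. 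At that point you write that ``a dimension count using purity suffices'': it does not. Purity fixes the weight, not the dimensions of the individual $(p,q)$-pieces, and $b_2(X)<b_2(\tilde X)$ whenever $\pi$ contracts a divisor, so no numerical comparison between $\H^2(X,\C)$ and $\H^2(\tilde X,\C)$ is available. What must actually be shown is that the cokernel of $\pi^*$ on $\H^2$, which the Leray sequence embeds into $\H^0(X,R^2\pi_*\C)$, contains no $(2,0)$ or $(0,2)$ classes. This is the one place where a klt-specific geometric input beyond rationality and the extension theorem enters: fibers of a resolution of a klt variety are rationally chain connected (Hacon--McKernan), hence carry no nonzero global torsion-free $p$-forms; computing the edge map $\H^2(\tilde X,\C)\to\H^0(X,R^2\pi_*\C)$ stalkwise by restricting to fibers (via the {\L}ojasiewicz retraction theorem, after passing to a resolution whose reduced fibers have only smooth components) then shows that it annihilates $F^2\H^2(\tilde X,\C)$, and Hodge symmetry handles the $(0,2)$ part. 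Nothing in your proposal supplies this step. (For $p=1$ there is no issue, since $R^1\pi_*\C=0$ makes $\pi^*$ an isomorphism on all of $\H^1$.)

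A secondary remark: the obstacle you single out as the main one---reconciling Deligne's cubical-hyperresolution construction with the concrete pullback along $\pi$---is not where the difficulty lies. Functoriality of mixed Hodge structures makes $\pi^*$ a morphism of mixed Hodge structures, and Deligne's strictness for both filtrations then formally reduces everything about the filtration on $X$ to the corresponding statements on $\tilde X$ once purity and the bijectivity of $\pi^*$ on the extreme graded pieces are established. Your handling of purity, of $\kappa_{0p}$ via rationality, and of $\kappa_{p0}$ via the extension theorem otherwise matches the paper's Steps 1, 2 and 5; it is Steps 3 and 4 (the fiber analysis) that are missing.
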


We give two applications of this result. In Theorem~\ref{lef} we prove a Lefschetz $(1,1)$ Theorem for the $\H^{1,1}(X)$ part. This means that the integral points in $\H^{1,1}(X)$ are precisely the first Chern classes of line bundles on $X$. Furthermore, we prove in Corollary~\ref{bilrel} an analogue of the Hodge-Riemann bilinear relations for klt varieties, in the special case where only forms of degree one and two are involved.

\subsection{Further applications to symplectic varieties}

In a further paper we apply the results found here to holomorphic symplectic varieties, which automatically have canonical singularities. There we show that the generalized Beauville-Bogomolov form on an irreducible symplectic variety $X$ has the index ${(3,0,b_2(X)-3)}$, like in the smooth case. As a corollary, every non-trivial fibration of an irreducible symplectic variety $X$ onto a normal, complex projective variety $B$ is up to Stein factorization a Lagrangian fibration with $\dim B=\frac{1}{2}\dim X$ and Picard number $\rho(B)=1$.

\subsection{Acknowledgement}

The author wants to thank his supervisor Prof.\,Kebekus for his advice and many fruitful discussions.

\section{Differential forms and Extension Theorem}

\subsection{Holomorphic, torsion-free and reflexive differential forms}
\label{forms}
Let $X$ be a complex variety, not necessarily irreducible. We denote the sheaf of \emph{Kähler differentials} by $\Omega^1_X$. The sheaves of \emph{holomorphic differential forms} are the exterior powers ${\Omega^p_X\defeq\bigwedge^p\Omega^1_X}$ with the convention ${\Omega^0_X\defeq\O_X}$.

Denoting with $\fM_X$ the sheaf of rational functions on $X$, we call the kernel of the natural map ${\Omega^p_X\ra\Omega^p_X\otimes_{\O_X}\fM_X}$ the \emph{torsion subsheaf}, $\tor\Omega^p_X$. For the quotient we use the notation ${\check{\Omega}^p_X\defeq\sfrac{\Omega^p_X}{\tor\Omega^p_X}}$. By slight abuse of language, we call $\check{\Omega}^p_X$ the sheaf of \emph{torsion-free differential forms}, cf.\,{\cite[II]{MR3084424}}.

\emph{Reflexive differential forms} are holomorphic $p$-forms on the smooth locus $X_{reg}$ of a variety $X$. We notate the associated sheaf on $X$ as $\Omega^{[p]}_X$ and get
\begin{align*}
	\Omega^{[p]}_X&=(\Omega^p_X)^{\ast\ast}=i_*\Omega^p_{X_{reg}},\\ \H^0(X,\,\Omega^{[p]}_X)&\cong\H^0(X_{reg},\,\Omega_{X_{reg}}^p),
\end{align*}
where $i\colon X_{reg}\hookrightarrow X$ is the inclusion. See
\cite{MR597077} for a reference on \emph{reflexive sheaves} and \cite[I--III]{GKKP11}, \cite[I]{Kebekus2016} for more information on reflexive differential forms.

\subsection{Extension Theorem for reflexive differential forms}
\label{extensionsection}
The extension theorem for differential forms \cite[Theorem~1.4]{GKKP11} shows that on a klt variety $X$ any reflexive form ${\alpha\in\H^0(X,\,\Omega^{[p]}_X)}$  can be \emph{extended} to any resolution of singularities. This allows us to construct pullbacks of reflexive forms along more general morphisms. 

\begin{thm}[Pullbacks of reflexive forms, \protect{\cite[Theorem~1.3]{MR3084424}}]\label{extension}
  Let $f\colon Y\ra X$ be a morphism between normal, complex projective
  varieties. Assume that there is a Weil divisor $D$ on $X$, such that $(X,D)$ is klt. Then there is a natural pullback morphism $f^*\Omega^{[p]}_X\ra\Omega^{[p]}_Y$, consistent with the natural pullback of Kähler differentials on $X_{reg}$.\qed
\end{thm}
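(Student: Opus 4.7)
My plan is to reduce the problem to Kähler pullback on a simultaneous resolution of $X$ and $Y$, using the Extension Theorem of \cite[Theorem~1.4]{GKKP11} to lift reflexive forms from $X$ to honest holomorphic forms upstairs. By the $f^{*}\dashv f_{*}$ adjunction, giving a morphism $f^{*}\Omega^{[p]}_X\ra\Omega^{[p]}_Y$ is the same as giving a morphism $\Omega^{[p]}_X\ra f_{*}\Omega^{[p]}_Y$, so it suffices to construct the latter.

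First choose a strong log resolution $\pi_X\colon\tilde{X}\ra X$ of the klt pair $(X,D)$, and then, by Hironaka's elimination of indeterminacies, a resolution $\pi_Y\colon\tilde{Y}\ra Y$ together with a morphism $\tilde{f}\colon\tilde{Y}\ra\tilde{X}$ satisfying $\pi_X\circ\tilde{f}=f\circ\pi_Y$. The klt hypothesis enters through the Extension Theorem \cite[Theorem~1.4]{GKKP11}, which guarantees that the natural map $\Omega^{[p]}_X\ra(\pi_X)_{*}\Omega^{p}_{\tilde{X}}$ induced by restriction to $X_{reg}$ is an isomorphism. The desired morphism $\Omega^{[p]}_X\ra f_{*}\Omega^{[p]}_Y$ is then defined as the composition
\[
	\Omega^{[p]}_X\xrightarrow{\sim}(\pi_X)_{*}\Omega^{p}_{\tilde{X}}\lra(\pi_X)_{*}\tilde{f}_{*}\Omega^{p}_{\tilde{Y}}=f_{*}(\pi_Y)_{*}\Omega^{p}_{\tilde{Y}}\lra f_{*}\Omega^{[p]}_Y,
\]
where the middle arrow is $(\pi_X)_{*}$ applied to the (adjoint of the) Kähler pullback $\tilde{f}^{*}\colon\tilde{f}^{-1}\Omega^{p}_{\tilde{X}}\ra\Omega^{p}_{\tilde{Y}}$, and the last arrow is $f_{*}$ of the canonical morphism $(\pi_Y)_{*}\Omega^{p}_{\tilde{Y}}\ra\Omega^{[p]}_Y$ obtained by restricting sections to $\pi_Y^{-1}(Y_{reg})$ and extending across codimension-two loci using reflexivity of $\Omega^{[p]}_Y$.

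Two verifications remain. For independence of the chosen resolutions, any two admissible pairs are dominated by a common third, and functoriality of Kähler pullback together with uniqueness in the Extension Theorem show that the two resulting morphisms coincide. For consistency with Kähler pullback on $X_{reg}$, the resolutions $\pi_X$ and $\pi_Y$ can be chosen to be isomorphisms over $X_{reg}$ and over $f^{-1}(X_{reg})\cap Y_{reg}$ respectively, so that over this open subset of $Y$ the construction reduces tautologically to the usual Kähler pullback; normality of $Y$ and reflexivity of $\Omega^{[p]}_Y$ then determine the morphism uniquely on all of $Y$.

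The main obstacle is the Extension Theorem itself --- the only point where the klt hypothesis enters. Without it one cannot lift a reflexive form on $X$ to a form on $\tilde{X}$ that could be pulled back through $\tilde{f}$, and the construction never gets off the ground. Everything else is a formal diagram chase in which neither the singularities of $X$ (beyond klt) nor those of $Y$ (beyond normality) play a role.
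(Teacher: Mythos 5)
First, a point of reference: the paper does not prove this statement at all --- it is imported verbatim from \cite[Theorem~1.3]{MR3084424} (note the \verb|\qed| directly after the statement), and the surrounding text explicitly says that only the case of \emph{surjective} $f$ will be used. So there is no in-paper proof to compare against; your attempt must be judged against the actual content of the cited theorem, which is stated for an \emph{arbitrary} morphism $f\colon Y\ra X$.

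For dominant (in particular surjective or birational) $f$ your construction is essentially correct and is the standard argument: resolve, extend via \cite[Theorem~1.4]{GKKP11}, pull back Kähler differentials, push down, reflexivize. But for a general morphism there is a genuine gap at the very first step of your reduction. If $f(Y)$ is contained in the image of the exceptional locus of $\pi_X$ (e.g.\ $Y$ a subvariety of the singular locus of $X$), then $\pi_X^{-1}\circ f$ is not even a rational map, and ``Hironaka's elimination of indeterminacies'' does not produce a lift $\tilde{f}\colon\tilde{Y}\ra\tilde{X}$ with $\tilde{Y}\ra Y$ birational: every component of $Y\times_X\tilde{X}$ dominating $Y$ may have strictly larger dimension than $Y$. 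One must instead choose some generically finite multisection of $\pi_X^{-1}(f(Y))\ra f(Y)$, and then the resulting map visibly depends on that choice. Showing independence of this choice is the actual content of the theorem, and it is \emph{not} ``a formal diagram chase'': the key non-formal input is that the extension $\tilde{\alpha}$ of a reflexive form on a klt variety restricts to zero on the fibers of $\pi_X$ (because those fibers are rationally chain connected and carry no torsion-free forms --- precisely the statement quoted as Theorem~\ref{torfree} in this paper). Without that vanishing, two different lifts $\tilde{f}$ could pull $\tilde{\alpha}$ back to genuinely different forms on $\tilde{Y}$. Your ``two verifications remain'' paragraph, which appeals only to functoriality of Kähler pullback and uniqueness in the Extension Theorem, silently assumes the dominant case and therefore does not prove the theorem as stated.
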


 The consistency with the natural pullback of Kähler differentials is made precise in \cite[Theorem~5.2]{MR3084424}. We will only consider the case when $f\colon Y\ra X$ is surjective, for example a birational morphism. Then this consistency means that the pullback morphism $f^*\Omega^{[p]}_X\ra\Omega^{[p]}_Y$ agrees with the usual pullback of Kähler differentials wherever $X$ and $Y$ are smooth. In other words, for every $\alpha\in\H^0(X,\,\Omega^{[p]}_X)$ there is an $\tilde{\alpha}\in\H^0(Y,\,\Omega^{[p]}_Y)$ that coincides with $f^*(\restr{\alpha}{X_{reg}})$, seen as a Kähler differential on $Y_{reg}\cap f^{-1}(X_{reg})$. We call $\tilde{\alpha}$ an \emph{extension} of $\alpha$ to $Y$. Pulling back reflexive forms is contravariantly functorial with respect to $f$. 

\section{Hodge Theory for klt singularities}
\label{hodgetheory}

\subsection{Cohomology of fibers}

As a preparation for the proof of Theorem~\ref{mainthm}, we mention a consequence of a triangulation theorem of {\L}ojasiewitcz. It is frequently used in the literature, but the author does not know of a convenient reference. Moreover, we will need results about differential forms on the fibers of a resolution of a variety with klt singularities.

\begin{thm}
	\label{loj}
	Let $\nu\colon\tilde{X}\ra X$ be a resolution of a compact, complex variety $X$. Then every point $p\in X$ has a basis of analytical neighborhoods $(U_i)_i$, such that the fiber $E\defeq\nu^{-1}(p)$ is a deformation retract of each $\nu^{-1}(U_i)$. Thus for any $k\in\Z$ the restrictions ${\H^k\big(\nu^{-1}(U_i),\,\Z\big)\stackrel{\sim}{\lra}\H^k(E,\,\Z)}$ are isomorphisms and we get ${(R^k\nu_*\Z)_p\cong\H^k(E,\,\Z)}$.
\end{thm}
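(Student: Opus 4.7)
The plan is to reduce the statement to a simplicial situation via Łojasiewicz's triangulation theorem and then invoke standard PL topology.

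First I would apply the triangulation theorem in its relative form (Łojasiewicz, with relative refinements due to Hardt and Verona) to the proper morphism $\nu\colon\tilde X\ra X$: one chooses triangulations of $X$ and $\tilde X$ with respect to which $\nu$ becomes a simplicial map, with $p$ a vertex of $X$. Then $E=\nu^{-1}(p)$ is automatically a subcomplex of $\tilde X$, since a point in the interior of a simplex $\sigma$ maps to $p$ only if every vertex of $\sigma$ does. I then take $U_i$ to be the open star of $p$ in the $i$-th iterated barycentric subdivision of $X$; these form a decreasing basis of open neighborhoods of $p$, each contractible. Because $\nu$ is simplicial, $\nu^{-1}(U_i)$ is precisely the open star of $E$ in the corresponding subdivision of $\tilde X$.

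Next I would invoke the classical PL fact that a subcomplex is a strong deformation retract of its regular (second-derived) neighborhood in a sufficiently fine subdivision; applied to $E\subset\tilde X$, this yields the desired deformation retractions $\nu^{-1}(U_i)\to E$. Homotopy invariance of singular cohomology then gives $\H^k(\nu^{-1}(U_i),\Z)\stackrel{\sim}{\lra}\H^k(E,\Z)$, and by definition $(R^k\nu_*\Z)_p=\varinjlim_{U\ni p}\H^k(\nu^{-1}(U),\Z)$, which by cofinality of $(U_i)$ equals $\H^k(E,\Z)$.

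The main obstacle is the first step: producing a triangulation in which $\nu$ is genuinely simplicial, so that $\nu^{-1}$ of the open star of $p$ is literally the open star of $E$, requires a relative triangulation theorem that is significantly deeper than the absolute Łojasiewicz result for a single analytic set. A more elementary fallback which still yields the cohomology conclusion is to triangulate only $\tilde X$ with $E$ as a subcomplex, obtain a regular-neighborhood basis $W_j\searrow E$ in $\tilde X$ that deformation retracts onto $E$, and then use properness of $\nu$ to find open $U_i\ni p$ with $E\subset\nu^{-1}(U_i)\subset W_{j(i)}$; a standard $2$-out-of-$3$ argument applied to the inclusions $E\hookrightarrow W_{j(i+1)}\hookrightarrow\nu^{-1}(U_i)\hookrightarrow W_{j(i)}$ then gives $\H^k(\nu^{-1}(U_i),\Z)\cong\H^k(E,\Z)$, even when $\nu^{-1}(U_i)$ itself cannot directly be identified as a regular neighborhood.
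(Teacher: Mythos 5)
Your fallback argument is essentially the paper's proof. The paper applies {\L}ojasiewicz's (absolute) triangulation theorem to the pair of semi-analytic sets $E\subset V=\nu^{-1}(U)$ inside the manifold $\tilde{X}$, so that $E$ becomes a subcomplex of a triangulation of $V$; it then invokes the regular-neighborhood deformation retraction for a subcomplex and uses properness/compactness to replace those neighborhoods by ones of the form $\nu^{-1}(U_i)$. That is exactly your second route, and your instinct to avoid triangulating the \emph{map} is the right one: making $\nu$ simplicial requires a triangulation theorem for proper (sub)analytic maps, which is far deeper than {\L}ojasiewicz's result and is not available in the generality needed here (it is known only under additional hypotheses, e.g.\ for maps with finite fibers or satisfying Thom-type conditions), so your primary route should be discarded in favour of the fallback.

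One point to flag: your sandwich argument $E\hookrightarrow W_{j(i+1)}\hookrightarrow \nu^{-1}(U_i)\hookrightarrow W_{j(i)}$ proves the cohomological conclusions $\H^k\big(\nu^{-1}(U_i),\Z\big)\cong\H^k(E,\Z)$ and the stalk computation $(R^k\nu_*\Z)_p\cong\H^k(E,\Z)$ --- which is all that the rest of the paper ever uses --- but it does \emph{not} prove the literal statement that $E$ is a deformation retract of $\nu^{-1}(U_i)$. The paper obtains that stronger claim by asserting that \emph{every} sufficiently small analytic neighborhood of the subcomplex $E$ deformation retracts onto $E$ and then choosing the $U_i$ so that $\nu^{-1}(U_i)$ lies in this range; if you want the retraction itself rather than just the cohomology isomorphisms, you must either justify such a statement for the specific open sets $\nu^{-1}(U_i)$ (they need not be regular neighborhoods) or weaken the conclusion of the theorem to the cohomological one, which costs nothing for the applications in Theorems~\ref{hdec} and~\ref{lef}.
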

\begin{proof}
	Consider any semi-analytic neighborhood $U$ of $p$ and $V\defeq\nu^{-1}(U)$. {\L}ojasiewitcz's result \cite[Theorem~4]{Loj} applied to the semi-analytic subsets $E$ and $V$ of the smooth manifold $\tilde{X}$ gives a locally finite triangulation of $V$, such that $E$ is a subcomplex. Then \cite[Proposition~A.5]{Hatcher} gives a deformation retraction from any sufficiently small analytical neighbourhood $V'$ of $E$ to the fiber $E$. As $\tilde{X}$ is compact, we can choose a basis of these neighbourhoods of the form $\nu^{-1}(U_i)$ with the $(U_i)_i$ a basis of analytical neighborhoods of $p$.
	
	As deformation retractions are homotopies, we get ${\H^k\big(\nu^{-1}(U_i),\,\Z\big)\stackrel{\sim}{\lra}\H^k(E,\,\Z)}$ for any $k\in\Z$. Recall from \cite[Proposition III.8.1]{Har77} that the stalks $(R^k\nu_*\Z)_p$ can be calculated as $\lim_U \H^k(\nu^{-1}(U),\,\Z)$, where the limit is taken over all analytic neighborhoods $U$ of $p$, which gives ${(R^k\nu_*\Z)_p\cong\H^k(E,\,\Z)}$.
\end{proof}

\begin{thm}[\protect{Torsion-free differential forms on fibers, \cite[Theorem~4.1]{MR3084424}, \cite{HMcK07}}]
	\label{torfree}
	Let $X$ be a complex variety with at most klt singularities. We take any resolution $\nu\colon\tilde{X}\ra X$. Let $E$ be the reduction of a fiber of $\nu$. Then there are no global torsion-free differential forms on $E$. In other words ${\H^0(E,\,\check{\Omega}^p_E)=0}$ for $p>0$.\qed
\end{thm}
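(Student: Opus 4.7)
My plan is to combine two ingredients: the theorem of Hacon and McKernan \cite{HMcK07} asserting that fibers of resolutions of klt varieties are rationally chain connected, and the classical vanishing of global holomorphic $p$-forms on smooth projective rationally chain connected varieties for $p>0$.

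Concretely, let $\alpha \in \H^0(E, \check{\Omega}^p_E)$ with $p>0$. Since $\check{\Omega}^p_E$ is torsion-free on the reduced variety $E$, the form $\alpha$ is determined by its restriction to $E_{reg}$, so it suffices to show $\alpha|_{E_{reg}} = 0$. I would choose a resolution of singularities $\pi \colon \tilde{E} \to E$, resolving the irreducible components of $E$ separately, arranged to be an isomorphism over a dense open subset $U$ of $E_{reg}$. The pullback $\pi^*(\alpha|_U)$ is then a holomorphic $p$-form on the dense open subset $\pi^{-1}(U) \subset \tilde{E}$. The first crucial step is to argue that this form extends to a global section $\tilde{\alpha} \in \H^0(\tilde{E}, \Omega^p_{\tilde{E}})$. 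Granting this, by \cite{HMcK07} the fiber $E$ is rationally chain connected, and the property lifts to $\tilde{E}$ via strict transforms of rational curves together with the exceptional components of $\pi$. A smooth projective rationally chain connected variety has no nonzero global holomorphic $p$-forms for $p>0$ --- a classical consequence of $\H^0(\P^1, \Omega^p_{\P^1}) = 0$ for $p>0$ together with the fact that a holomorphic form is determined by its values on a covering family of rational curves. Therefore $\tilde{\alpha} = 0$, which forces $\alpha|_U = 0$, and hence $\alpha = 0$ by torsion-freeness.

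The main obstacle is the extension step: showing that $\pi^*(\alpha|_U)$ extends across the exceptional divisor of $\pi$ as a \emph{holomorphic} (rather than meromorphic) form. Holomorphic forms do not extend across exceptional divisors of arbitrary birational morphisms, so the specific structure of the situation must be used. One natural route is to pass first to the normalization of $E$ and apply the Extension Theorem~\ref{extension} after endowing the normalization with a suitable klt boundary, using adjunction for the inclusion $E \subset \tilde{X}$ together with the klt hypothesis on $X$. A more elementary approach exploits that $\alpha$ is already globally defined on $E$ (not merely on $E_{reg}$) and uses the naturality of the cotangent sequence under $\pi$ to control the pullback over the exceptional locus. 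Either way, this extension is the technical heart of the argument; all other steps are essentially formal once the fibers are known to be rationally chain connected.
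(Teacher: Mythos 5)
First, note that the paper does not prove this statement at all: it is quoted with a \qed{} as \cite[Theorem~4.1]{MR3084424}, and the only hint given about the proof is the remark that, by \cite{HMcK07}, the fiber $E$ is rationally chain connected and therefore ``supports only torsion differential forms.'' Your proposal correctly identifies the Hacon--McKernan input, and your opening reduction (a section of the torsion-free sheaf $\check{\Omega}^p_E$ vanishing on a dense open subset is zero) is fine. But the route you then take -- resolve $E$, extend the form, and kill it by rational connectedness upstairs -- has two genuine gaps, and the second one is fatal rather than merely technical.

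The first gap is the one you flag yourself: the extension of $\pi^*(\alpha|_U)$ to a holomorphic form on $\tilde{E}$. Neither of your suggested fixes works. The fiber $E$ is in general of codimension $\geq 2$ in $\tilde{X}$, so there is no adjunction producing a klt (or any) boundary on its normalization, and Theorem~\ref{extension} requires the \emph{target} of the morphism to underlie a klt pair -- $E$ is nothing of the sort; it is typically a quite arbitrary reduced projective scheme. The second gap is worse: even granting the extension, the conclusion $\tilde{\alpha}=0$ requires the components of $\tilde{E}$ to carry no holomorphic $p$-forms, and rational chain connectedness of $E$ gives you neither that nor even rational chain connectedness of $\tilde{E}$. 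RCC is not a birational invariant (the projective cone over an elliptic curve is rationally chain connected, while its resolution is a ruled surface over the elliptic curve and carries a nonzero holomorphic $1$-form), and for a reducible $E$ the connecting chains may jump between components, so an individual component need not be RCC at all. Your phrase ``the property lifts to $\tilde{E}$ via strict transforms \ldots together with the exceptional components'' is exactly the false implication. This is precisely why the cited proof works with \emph{torsion-free} forms on $E$ itself and never resolves $E$: one pulls $\alpha$ back, as a torsion-free form, to the chains of rational curves provided by \cite{HMcK07} (where $\H^0(\P^1,\,\Omega^p_{\P^1})=0$ forces vanishing along a connecting family through a general point), rather than trying to transport the problem to a smooth model, where the relevant positivity is destroyed.
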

 Previously, Namikawa proved in \cite[Lemma~1.2]{NamikawaDeformationTheory} a similar statement for $X$ with rational singularities, but only with the assumption that $E$ is an snc fiber. The proof of Theorem~\ref{torfree} uses that by \cite{HMcK07} every fiber $E$ is rationally chain connected. Hence $E$ supports only torsion differential forms.

\begin{lem}
	\label{formsonfiber}
	Let $X$ be a complex projective klt variety. Consider a resolution $\nu\colon\tilde{X}\ra X$ and $E$ the reduction of any fiber of $\nu$. Assume that $E$ has only smooth components. Then for every $p>0$ the restriction $F^p\H^p(\tilde{X},\,\C)\ra F^p\H^p(E,\,\C)$ is the zero map, where $F^{\bullet}$ denotes the Hodge filtration.
\end{lem}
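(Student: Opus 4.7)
The plan is to show first that $\omega \in F^p\H^p(\tilde X,\C) = \H^0(\tilde X, \Omega^p_{\tilde X})$ restricts to zero as a holomorphic form on each smooth component $E_i$ of $E$, and then to upgrade this componentwise vanishing to the vanishing of the class $[\omega|_E] \in F^p\H^p(E,\C)$ using the mixed Hodge structure on $E$.

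Since $\nu$ is an isomorphism over the dense open $X_{reg}$, there is a unique reflexive form $\eta \in \H^0(X, \Omega^{[p]}_X)$ with $\nu^*\eta = \omega$, the pullback taken in the sense of Theorem~\ref{extension}. Denote by $x \in X$ the point such that $E = \nu^{-1}(x)_{red}$. For each smooth component $E_i$, the composite $\nu \circ j_i \colon E_i \hookrightarrow \tilde X \to X$ factors as $\iota \circ c_i$, where $\iota\colon \{x\} \hookrightarrow X$ and $c_i \colon E_i \to \{x\}$ is the constant map. All of $\tilde X$, $E_i$ and $\{x\}$ are normal projective and $X$ is klt, so Theorem~\ref{extension} and its contravariant functoriality yield
\begin{equation*}
  \omega|_{E_i} \;=\; j_i^*\nu^*\eta \;=\; (\iota \circ c_i)^*\eta \;=\; c_i^*\iota^*\eta \;=\; 0 \quad\text{in } \H^0(E_i, \Omega^p_{E_i}),
\end{equation*}
because $\iota^*\eta \in \H^0(\{x\}, \Omega^{[p]}_{\{x\}}) = 0$ for $p > 0$.

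To pass from the vanishing of these holomorphic forms to the vanishing of the cohomology class, let $\pi\colon \hat E \defeq \bigsqcup_i E_i \to E$ denote the normalization and $j\colon E \hookrightarrow \tilde X$ the inclusion. Each class $[\omega|_{E_i}] \in F^p\H^p(E_i, \C) = \H^0(E_i, \Omega^p_{E_i})$ vanishes by the previous step, hence $\pi^*(j^*\omega) = \bigoplus_i j_i^*\omega = 0$ and therefore $j^*\omega \in \ker\pi^*$. Now $\pi^*\colon\H^p(E, \C)\to\H^p(\hat E, \C)$ is a morphism of mixed Hodge structures; because $\hat E$ is already smooth and $E$ is proper, $\hat E$ occurs as the degree-zero term of a smooth proper hypercover of $E$, and Deligne's construction then identifies $\ker\pi^*$ with $W_{p-1}\H^p(E,\C)$. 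In any mixed Hodge structure $F^p \cap W_{p-1} = 0$, since for $w \le p-1$ the graded piece $\mathrm{Gr}^W_w$ has no $(a,b)$-Hodge component with $a \ge p$, and strict compatibility of $F$ and $W$ propagates this to all of $W_{p-1}$. Thus $j^*\omega = 0$ in $F^p\H^p(E,\C)$.

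The main obstacle is the MHS identification $\ker\pi^* = W_{p-1}\H^p(E,\C)$, which is exactly where the hypothesis that $E$ has only smooth components enters: it allows $\hat E$ to serve as the smooth bottom term in Deligne's hypercover construction, so that the edge morphism of the associated weight spectral sequence is $\pi^*$ itself. The remainder of the argument is a formal consequence of the functoriality of Theorem~\ref{extension} together with the triviality of reflexive $p$-forms on a point.
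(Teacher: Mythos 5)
Your proof is correct, but it reaches the conclusion by a genuinely different route in both halves. For the componentwise vanishing, the paper invokes Theorem~\ref{torfree} (the vanishing $\H^0(E,\,\check{\Omega}^p_E)=0$, which rests on rational chain connectedness of the fibers via \cite{HMcK07}) together with a diagram chase through torsion-free forms; you instead use the contravariant functoriality of the reflexive pullback to factor $\nu\circ j_i$ through the point $\{x\}$, killing $\omega|_{E_i}$ directly. This is slicker and avoids rational chain connectedness, but be aware that it uses the pullback theorem in more generality than the paper sets up: the paper explicitly restricts attention to surjective $f$, whereas you need the pullback and its compatibility with composition for the non-dominant maps $j_i$ and $\nu\circ j_i$, whose images may lie in $X_{sing}$; this generality is indeed covered by \cite[Theorem~5.2]{MR3084424}, and the intermediate varieties in your two factorizations ($\tilde{X}$ and $\{x\}$) are smooth, so the composition rule applies. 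For assembling the components, the paper runs an elementary induction on $E^k=\bigcup_{i\le k}E_i$ via Mayer--Vietoris, using only $F^p\H^{p-1}=0$; you pass to the normalization $\hat{E}=\bigsqcup_i E_i$ and use $\ker\big(\pi^*\colon\H^p(E,\,\C)\ra\H^p(\hat{E},\,\C)\big)=W_{p-1}\H^p(E,\,\C)$ together with $F^p\cap W_{p-1}=0$. Only the inclusion $\ker\pi^*\subseteq W_{p-1}$, i.e.\ $\mathrm{Gr}^W_p\H^p(E,\,\C)\hookrightarrow\H^p(\hat{E},\,\C)$, is actually needed; it is correct and standard, but it leans on Deligne's hypercover description of the weight filtration, which is noticeably heavier machinery than the paper's Mayer--Vietoris induction (and the last step there is just downward induction through the weight graded pieces, not ``strictness''). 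In short, your argument trades the paper's two elementary inputs for two more powerful general facts, and is correct as written.
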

\begin{proof}
	Let ${E=\bigcup_{i=1}^{r} E_i}$ be the decomposition of $E$ into its smooth, irreducible components and denote ${E^k\defeq\bigcup_{i=1}^kE_i}$ for all $k$. The restriction of singular cohomology classes $\H^p(\tilde{X},\,\C)\ra\H^p(E^k,\,\C)$ is a morphism of mixed Hodge structures, which restricts to a morphism between the $F^p$-parts. To prove the Lemma we inductively show that $F^p\H^p(\tilde{X},\,\C)\ra F^p\H^p(E^k,\,\C)$ is the zero map for any $k$.	
	
	\subsubsection*{Proof that $F^p\H^p(\tilde{X},\,\C)\ra F^p\H^p(E_i,\,\C)$ is zero for all $i$.}
	
	 As $\tilde{X}$ and all components $E_i$ are smooth, we have $F^p\H^p(\tilde{X},\,\C)=\H^0(\tilde{X},\,\Omega^p_{\tilde{X}})$ and ${F^p\H^p(E_i,\,\C)=\H^0(E_i,\,\Omega^p_{E_i})}$. Furthermore, all holomorphic $p$-forms on $\tilde{X}$ and on the $E_i$ are torsion-free. Hence we get by \cite[Proposition~A.6]{MR3084424} the following commutative diagram.
	
	\begin{center} 
		\begin{tikzpicture}[scale=1.5]
		\node (A) at (0,0) {$\H^0(\tilde{X},\,\check{\Omega}^p_{\tilde{X}})$};
		\node (B) at (2,0) {$\H^0(E,\,\check{\Omega}^p_{E})$};
		\node (C) at (4,0) {$\H^0(E_i,\,\check{\Omega}^p_{E_i})$};
		\node (D) at (0,1) {$\H^0(\tilde{X},\,\Omega^p_{\tilde{X}})$};
		\node (E) at (2,1) {$\H^0(E,\,\Omega^p_E)$};
		\node (F) at (4,1) {$\H^0(E_i,\,\Omega^p_{E_i})$};
		
		\path[->,font=\scriptsize]
		(D) edge node[left]{$\wr$} (A)
		(E) edge (B)
		(F) edge node[left]{$\wr$} (C)
		(A) edge (B)
		(B) edge (C)
		(D) edge (E)
		(E) edge (F);
		\end{tikzpicture}
	\end{center}
	Here, the horizontal maps are restriction maps and the vertical maps quotient out torsion diffential forms. By Theorem~\ref{torfree} we have $\H^0(E,\,\check{\Omega}^p_{E})=0$, so ${\H^0(\tilde{X},\,\Omega^p_{\tilde{X}})\ra\H^0(E_i,\,\Omega^p_{E_i})}$ is the zero map.
	
	\subsubsection*{Proof that $F^p\H^p(\tilde{X},\,\C)\ra F^p\H^p(E^{k+1},\,\C)$ is zero for any $k\in\{1,\ldots, r-1\}$.}	
	 We use the standard Mayer-Vietoris sequence, \cite[page~203]{Hatcher}.
	\[
	\cdots\ra\H^{p-1}(E^k\cap E_{k+1},\C)\ra \H^p(\underbrace{E^k\cup E_{k+1}}_{=E^{k+1}},\C)\ra \H^p(E^k,\,\C)\oplus\H^p(E_{k+1},\,\C)\ra\cdots
	\]
	This is an exact sequence of mixed Hodge structures, hence we can restrict it to the $F^p$-part of the Hodge filtration. We have ${F^p\H^{p-1}(E^k\cap E_{k+1},\C)=0}$ by the definition of the mixed Hodge structure on a complex projective variety. This gives us the inclusion ${F^p\H^p(E^{k+1},\,\C)\hookrightarrow F^p\H^p(E^k,\,\C)\oplus F^p\H^p(E_{k+1},\,\C)}$, which is, up to sign, the sum of the two restrictions ${F^p\H^p(E^{k+1},\,\C)\ra F^p\H^p(E^k,\,\C)}$ and ${F^p\H^p(E^{k+1},\,\C)\ra F^p\H^p(E_{k+1},\,\C)}$. Hence we get the following commutative diagram.
	\begin{center}
	\begin{tikzpicture}[scale=1.5]
	\node (A) at (0,0){$F^p\H^p(\tilde{X},\,\C)$};
	\node (B) at (2,0){$F^p\H^p(E^{k+1})$};
	\node (C) at (5,0){$F^p\H^p(E^k,\,\C)\oplus F^p\H^p(E_{k+1},\,\C)$};
	\node (D) at (5,1){$F^p\H^p(E^k,\,\C)$};
	\node (E) at (5,-1){$F^p\H^p(E_{k+1},\,\C)$};
	\path[->,font=\scriptsize]
	(A) edge (B)
	(C) edge (D)
	(C) edge (E)
	(A) edge node[above]{0} (D)
	(A) edge node[below]{0} (E);
	
	\path[right hook->,font=\scriptsize]
	(B) edge (C);
	\end{tikzpicture}
	\end{center}
	As the diagonal maps are zero by the induction hypothesis, the map $F^p\H^p(\tilde{X},\,\C)\ra F^p\H^p(E^{k+1})$ is also the zero map. This completes the proof.
\end{proof}

%\begin{rem}
	%Although the restriction map $F^p\H^p(\tilde{X},\,\C)\ra F^p\H^p(E,\,\C)$ is zero, the fiber $E$ might carry non-trivial differential forms. We can take for example $X=\P^3$ and for $\tilde{X}$ the result of first blowing up of a point $p\in X$ and an elliptic curve $C$ on the exceptional divisor. The fiber $E$ over $p$ has then two irreducible components, $\P^2$ and a $\P^1$-bundle over $C$. We can now take a non-trivial one-form on $C$, extend it constantly along the $\P^1$-fibers and extend it by zero along the $\P^2$
%\end{rem}

To make use of the preceding Lemma~\ref{formsonfiber}, we need to make sure that we can always find a resolution fulfilling the needed assumption on the fibers. This is a variation of the frequently used strong resolution of singularities from \cite[Theorem~3.35]{Kollar07}.

\begin{lem}[Resolution of singularities with nice fibers]
\label{niceresolution}
Let $X$ be a complex projective variety. Then there is a resolution of singularities $\nu\colon\tilde{X}\ra X$, given as a sequence of blowups of smooth centers over the singular locus $X_{sing}$, such that the reduction of every fiber of $\nu$ has only smooth components.
\end{lem}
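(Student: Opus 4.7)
I would begin from a \emph{strong} resolution $\nu_0\colon X_0 \ra X$ as in \cite[Theorem~3.35]{Kollar07}: a composition of blowups of smooth centers lying over $X_{sing}$, whose exceptional divisor $E_0=\bigcup_i E_{0,i}$ is simple normal crossings. Consequently every stratum $E_{0,I}\defeq\bigcap_{i\in I}E_{0,i}$ is smooth.

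For $p\in X_{sing}$ the reduced fiber $\nu_0^{-1}(p)_{red}$ is a closed subvariety of $E_0$, and every irreducible component $C$ of it sits inside some stratum $E_{0,I}$. If $C$ exhausts a connected component of $E_{0,I}$, then $C$ is automatically smooth. The problematic case is when the stratum $E_{0,I}$ dominates a positive-dimensional subvariety of $X$ under $\nu_0$: then $C$ can be a \emph{proper}, and a priori singular, subvariety of the otherwise smooth $E_{0,I}$. These are the components I need to eliminate.

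My plan is to perform further smooth-center blowups $\pi\colon\tilde X\ra X_0$, with centers contained in $E_0$ (so still over $X_{sing}$), such that the fibers of $\nu\defeq\nu_0\circ\pi$ acquire only smooth components. I would argue by induction on $\dim\nu_0(E_{0,I})$, treating strata with largest image dimension first. For each such stratum I apply Hironaka flattening and functorial resolution of singularities to the projective morphism $\nu_0|_{E_{0,I}}\colon E_{0,I}\ra\nu_0(E_{0,I})$, after which the closures of its fibers become smooth subvarieties of $\tilde X$. Since every blowup center is smooth and lies inside the current exceptional snc divisor, the snc property — and hence smoothness of all strata — is preserved throughout the process.

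The main difficulty is global coordination: a blowup designed to regularize one fiber component could, a priori, introduce new singular components in fibers over other points. The functoriality (and equivariance) of modern resolution algorithms is exactly what saves us here, producing one coherent sequence of smooth-center blowups that simultaneously handles all strata. Noetherianity of the standard resolution invariants provides termination of the induction.
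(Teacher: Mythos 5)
Your reduction to a strong resolution with snc exceptional divisor, and your diagnosis of which fiber components can be singular (those sitting properly inside a stratum $E_{0,I}$ that dominates a positive-dimensional subvariety of $X$), are fine. The gap is in the step that is supposed to fix them: ``apply Hironaka flattening and functorial resolution of singularities to $\nu_0|_{E_{0,I}}$, after which the closures of its fibers become smooth subvarieties.'' Flattening is a modification of the \emph{base} of the morphism, which here is $\nu_0(E_{0,I})\subseteq X$ --- and $X$ must not be touched, since $\nu$ has to remain a resolution of $X$ given by blowups over $X_{sing}$. More seriously, no sequence of blowups of the total space of a family makes \emph{all} fiber closures smooth: already for a pencil of plane curves degenerating to a nodal member, the special fiber of any modification of the total space over the fixed base is never an irreducible smooth curve; at best its \emph{reduction acquires smooth components}. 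But that weaker statement, uniformly over every point of the base, is exactly the lemma you are trying to prove, so invoking it for each $\nu_0|_{E_{0,I}}$ is circular. The appeal to functoriality and equivariance of resolution algorithms does not repair this: functoriality concerns compatibility with smooth morphisms and group actions and says nothing about coordinating the (impossible) simultaneous resolution of the fibers of a non-smooth morphism.

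The idea you are missing is that one should not try to fix all fibers in one pass, but only \emph{generically} over the current bad locus, and then run a Noetherian induction on that locus downstairs in $X$. Concretely: let $Z(f)\subseteq X$ be the closure of the set of points whose reduced fiber has a singular component; take an embedded resolution $g$ of $f^{-1}\big(Z(f)\big)$ by smooth blowups over it, so that its preimage $E$ becomes an snc divisor with smooth components $E_i$. The fibers of $f'\defeq f\circ g$ over $Z(f)$ lie in $E$, and by generic smoothness in characteristic zero each $f'|_{E_i}$ is smooth over a dense open subset of its image; hence over a non-empty open subset of $Z(f)$ the new reduced fibers have only smooth components, so $Z(f')\subsetneq Z(f)$. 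Iterating terminates because $X$ is Noetherian. This sidesteps both the flattening issue and the global-coordination issue entirely, since at each stage only the generic behaviour over $Z(f)$ matters.
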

\begin{proof}
Let $f\colon Y\ra X$ be any resolution of singularities. We write $Z(f)\subset X$ for the Zariski closure of the locus of points $x\in X$, for which their reduced fiber ${f^{-1}(x)_{red}}$ has a singular component. We notate the preimage as ${E(f)\defeq f^{-1}\big(Z(f)\big)}$.

Starting with an arbitrary resolution $f\colon Y\ra X$, we want to modify it to shrink $Z(f)$, until we eventually obtain $Z(f)=\varnothing$. For this we choose by \cite[Theorem~3.35]{Kollar07} an embedded resolution $g\colon Y'\ra Y$ of $E(f)\subset Y$ by blowing up smooth centers over $E(f)$. This is a birational morphism $g$ from a smooth variety $Y'$, such that the preimage $E\defeq g^{-1}\big(E(f)\big)$ is an snc divisor and $g$ is an isomorphism over $Y\setminus E(f)$. 

We consider the resolution $f'\defeq f\circ g$ of $X$. The fibers of $f'$ over points $x\in Z(f)$ are contained in the subvariety $E\subset Y'$. All irreducible components of $E$ are smooth by construction of $g$ and we denote them as $E_i$. Morphisms from smooth complex varieties are by \cite[Corollary~10.7]{Har77} generically smooth. Thus we find open subsets $U_i\subset Z(f)$ such that $\restr{f'}{E_i}$ is smooth over $U_i$ and the intersection $\bigcap_i U_i$ is a non-empty open subset of $Z(f)$. Therefore the reduction of every fiber of $\restr{f'}{E}$ over $\bigcap_i U_i$ has only smooth components. The reductions of fibers of $f'$ over points $x\in X\setminus Z(f)$ have smooth components by the assumptions on $f$ and $g$. Hence $Z(f')$ is a proper closed subset of $Z(f)$.

We can replace the resolution $(Y,f)$ by $(Y',f')$ and repeat this step, until we eventually obtain $Z(f)=\varnothing$. This terminates because the Zariski topology on $X$ is Noetherian. We conclude that there is a resolution $\nu\colon\tilde{X}\ra X$ with $Z(\nu)=\varnothing$.
\end{proof}

\subsection{The Hodge decompositions of \texorpdfstring{$\H^1(X,\,\C)$ and $\H^2(X,\,\C)$}{the first and second singular cohomology groups}}

In this section we prove Theorem~\ref{mainthm} in a similar way to \cite[B.2.4--B.2.9]{Kir15}. We take a resolution $\nu\colon\tilde{X}\ra X$ and calculate the low degree terms of the Leray spectral sequence for $\nu$ and the constant sheaf $\C$ on $\tilde{X}$. More precisely, we prove in Theorem~\ref{hdec} a more detailed version of Theorem~\ref{mainthm}, which contains also more information on the maps $\nu^*\colon\H^p(X,\,\C)\ra\H^p(\tilde{X},\,\C)$ for $p=1,2$.

\begin{thm}[Structure of $\H^1(X,\,\C)$ and $\H^2(X,\,\C)$]
\label{hdec}
  Let $X$ be a complex, projective variety with at most klt singularities and
  let $\nu\colon\tilde{X}\ra X$ be a resolution of singularities. Then for
  ${p=1,2}$ the pullback $\nu^*\colon\H^p(X,\,\C)\ra\H^p(\tilde{X},\,\C)$ is a
  monomorphism of mixed Hodge structures and the mixed Hodge structure on $\H^p(X,\,\C)$ is pure of weight $p$. Hence $\nu^*$ induces injective pullback maps
  \[
  \nu^*_{ab}: \H^{a,b}(X) \to \H^{a,b}(\tilde X)
  \quad\text{for } a+b=p,
  \]
  where the morphisms $\nu^*_{p0}$ and $\nu^*_{0p}$ are isomorphisms. There are
  canonical isomorphisms $\kappa_{p0}\colon\H^{p,0}(X)\stackrel{\sim}{\lra}\H^0(X,\, \Omega^{[p]}_X)$ and $\kappa_{0p}\colon\H^{0,p}(X)\stackrel{\sim}{\lra}\H^p(X,\, \O_X)$ that commute with the natural pullback morphisms and the Dolbeault isomorphisms $\delta_{p0},\delta_{0p}$.
	\begin{center}
		\begin{tikzpicture}[scale=1.5]
		\node (A) at (0,0) {$\H^{p,0}(X)$};
		\node (B) at (2,0) {$\H^0(X,\,\Omega^{[p]}_X)$};
		\node (C) at (0,1) {$\H^{p,0}(\tilde{X})$};
		\node (D) at (2,1) {$\H^0(\tilde{X},\,\Omega^p_{\tilde{X}})$};
		
		\path[->,font=\scriptsize]
		(A) edge node[above]{$\sim$} node[below]{$\kappa_{p0}$} (B)
		(C) edge node[above]{$\sim$} node[below]{$\delta_{p0}$} (D)
		(A) edge node[left]{$\nu^*_{p0}$} node[right]{$\wr$} (C)
		(B) edge node[left]{$ext$} node[right]{$\wr$} (D);
		\end{tikzpicture}
		\quad\quad\quad\quad
		\begin{tikzpicture}[scale=1.5]
		\node (A) at (0,0) {$\H^{0,p}(X)$};
		\node (B) at (2,0) {$\H^p(X,\,\O_X)$};
		\node (C) at (0,1) {$\H^{0,p}(\tilde{X})$};
		\node (D) at (2,1) {$\H^p(\tilde{X},\,\O_{\tilde{X}})$};
	
		\path[->,font=\scriptsize]
		(A) edge node[above]{$\sim$} node[below]{$\kappa_{0p}$} (B)
		(C) edge node[above]{$\sim$} node[below]{$\delta_{0p}$} (D)
		(A) edge node[left]{$\nu^*_{0p}$} node[right]{$\wr$} (C)
		(B) edge node[left]{$\nu^*$} node[right]{$\wr$} (D);
		\end{tikzpicture}
	\end{center}
\end{thm}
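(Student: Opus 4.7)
The plan is to derive Theorem~\ref{hdec} from three Leray spectral sequences for $\nu$ in parallel---one for the constant sheaf $\C_{\tilde X}$, one for the structure sheaf $\O_{\tilde X}$, and one for the sheaves $\Omega^p_{\tilde X}$---and to compare them using the Hodge decomposition on the smooth $\tilde X$ together with Theorem~\ref{extension}.

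First I would replace $\nu$ with a nice resolution as furnished by Lemma~\ref{niceresolution}, so that Lemma~\ref{formsonfiber} is available. Two preliminary isomorphisms then come essentially for free. Since $X$ is klt and therefore has rational singularities, $R^q\nu_*\O_{\tilde X}=0$ for all $q>0$, so the Leray spectral sequence for $\O_{\tilde X}$ collapses and the edge maps give isomorphisms $\nu^*\colon\H^p(X,\O_X)\stackrel{\sim}{\ra}\H^p(\tilde X,\O_{\tilde X})$ for every $p$. By the extension theorem the natural map $\nu_*\Omega^p_{\tilde X}\ra\Omega^{[p]}_X$ is an isomorphism, hence $\H^0(X,\Omega^{[p]}_X)\cong\H^0(\tilde X,\Omega^p_{\tilde X})$ canonically; this will play the role of the extension morphism $ext$ in the diagrams of the statement.

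The heart of the argument is the Leray five-term exact sequence of mixed Hodge structures for the constant sheaf,
\[
0\ra\H^1(X,\C)\xrightarrow{\nu^*}\H^1(\tilde X,\C)\ra\H^0(X,R^1\nu_*\C)\xrightarrow{d_2}\H^2(X,\C)\xrightarrow{\nu^*}\H^2(\tilde X,\C).
\]
Injectivity of $\nu^*$ on $\H^1$ is automatic. The central task, and the step I expect to be the main obstacle, is to prove that the connecting map $d_2$ vanishes, equivalently that $\nu^*$ is injective on $\H^2$. For this I would combine three ingredients: Theorem~\ref{loj} identifies the stalks of $R^1\nu_*\C$ with $\H^1(E,\C)$ of the reduced fibers $E$; strictness of morphisms of mixed Hodge structures with respect to the Hodge and weight filtrations restricts which graded pieces of $\H^0(X,R^1\nu_*\C)$ can contribute to $\H^2(X,\C)$; and Lemma~\ref{formsonfiber} applied with $p=1$, together with Theorem~\ref{torfree} and a Mayer--Vietoris analysis of the SNC fibers provided by Lemma~\ref{niceresolution}, controls the Hodge structure on $\H^1(E,\C)$ tightly enough to rule out any surviving contribution.

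Once $\nu^*$ is known to be an injective morphism of mixed Hodge structures for $p=1,2$, its image is a sub-MHS of the pure $\H^p(\tilde X,\C)$, so $\H^p(X,\C)$ is itself pure of weight $p$ and decomposes into Hodge pieces $\H^{a,b}(X)$ injecting into $\H^{a,b}(\tilde X)$ via $\nu^*_{ab}$. To see that $\nu^*_{0p}$ and $\nu^*_{p0}$ are actually isomorphisms I would chain the preliminary isomorphisms with $\nu^*$: the composition $\H^{0,p}(X)\hookrightarrow\H^p(X,\C)\ra\H^p(X,\O_X)\stackrel{\sim}{\ra}\H^p(\tilde X,\O_{\tilde X})=\H^{0,p}(\tilde X)$ coincides with $\delta_{0p}\circ\nu^*_{0p}$, so the first arrow is the desired $\kappa_{0p}$ and is an isomorphism by a dimension count; the analogous argument for the $(p,0)$-part, with the extension theorem replacing rationality, produces $\kappa_{p0}$. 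The commutativity of the two diagrams in the statement is then a formal consequence of the functoriality of $\nu^*$, of the Dolbeault isomorphisms on $\tilde X$, and of the extension morphism.
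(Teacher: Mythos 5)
Your overall skeleton --- the Leray spectral sequence for $\C_{\tilde X}$, rational singularities for $\O_{\tilde X}$, and the extension theorem for reflexive forms --- is the same as the paper's, but the two steps that carry the real content of the theorem are not secured.

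First, the vanishing of $d_2$, equivalently the injectivity of $\nu^*$ on $\H^2(X,\,\C)$. You propose to control the mixed Hodge structure on $\H^1(E,\,\C)$ for the reduced fibers $E$ by means of Lemma~\ref{formsonfiber} with $p=1$, Theorem~\ref{torfree} and Mayer--Vietoris. These tools only see the $F^1$-part of $\H^1(E,\,\C)$, i.e.\ torsion-free holomorphic $1$-forms on $E$ and its components; they say nothing about the weight-zero part of $\H^1(E,\,\C)$, which is purely combinatorial (it records loops in the configuration of the components of $E$) and is invisible to any statement about differential forms. So your argument does not rule out a nonzero $\H^0(X,\,R^1\nu_*\C)$, let alone a nonzero image of $d_2$. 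The paper sidesteps all of this: pushing forward the exponential sequence and using $R^1\nu_*\O_{\tilde X}=0$ (rational singularities) yields an injection $R^1\nu_*\Z\hookrightarrow R^1\nu_*\O_{\tilde X}=0$, hence $R^1\nu_*\C=0$. This kills the term $\H^0(X,\,R^1\nu_*\C)$ outright, makes $\nu^*$ bijective on $\H^1$ and injective on $\H^2$, with no Hodge-theoretic analysis of the fibers needed at this stage.

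Second, and more seriously, the surjectivity of $\nu^*_{20}$ and $\nu^*_{02}$. You derive it from ``a dimension count'', but the only inequality available a priori is $\dim\H^{0,2}(X)\le\dim\H^{0,2}(\tilde X)=\dim\H^2(X,\,\O_X)$, which comes from the injectivity you have just proved; equality of these dimensions is precisely the surjectivity you are trying to establish, so the argument is circular. This is the hardest part of the theorem. The paper proves it by computing the edge map $\pi_{02}\colon\H^2(\tilde{X},\,\C)\ra\H^0(X,\,R^2\nu_*\C)$ via \v{C}ech cohomology on a covering supplied by Theorem~\ref{loj}, identifying it with restriction to fibers, and then applying Lemma~\ref{formsonfiber} with $p=2$ (this, not $p=1$, is where that lemma is actually needed) to show that $\pi_{02}$ vanishes on $F^2\H^2(\tilde{X},\,\C)$; Lemma~\ref{niceresolution} and a comparison of two resolutions then remove the hypothesis that the fibers have smooth components. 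Without some version of this step your proof does not close.
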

\begin{proof}
	We proceed in five steps:
	\begin{enumerate}
		\item\label{inj} Calculate the low degree terms of the Leray spectral sequence for $\nu\colon\tilde{X}\ra X$ and the constant sheaf $\C$ on $\tilde{X}$.
		\item\label{pureness} Prove the bijectivity of ${\nu^*\colon\H^1(X,\,\C)\ra\H^1(\tilde{X},\,\C)}$, the injectivity of ${\nu^*\colon\H^2(X,\,\C)\ra\H^2(\tilde{X},\,\C)}$ and the purity of the Hodge structures on $\H^1(X,\,\C)$ and $\H^2(X,\,\C)$.
		\item\label{surj} Prove the surjectivity of the restrictions $\nu^*_{20}$ and $\nu^*_{02}$ of the map  ${\nu^*\colon\H^2(X,\,\C)\ra\H^2(\tilde{X},\,\C)}$, assuming that all reductions of fibers of $\nu$ have only smooth components.
		\item\label{nicefibers} Prove the surjectivity of $\nu^*_{20}$ and $\nu^*_{02}$ in general.
		\item\label{H20H02} Calculate $\H^{p,0}(X),\,\H^{0,p}(X)$ and construct the canonical isomorphisms $\kappa_{p0},\,\kappa_{0p}$ for $p=1,2$.
	\end{enumerate}
	
	\subsubsection*{Step \eqref{inj}.} The Leray spectral sequence of the constant sheaf $\C$ on $\tilde{X}$ is
	\[
	E_2^{pq}\defeq\H^p(X,\,R^q\nu_*\C)\Ra\H^{p+q}(\tilde{X},\,\C).
	\]
	Pushing forward the exponential exact sequence gives the exact sequence
	\begin{center}
	\begin{tikzpicture}[scale=1]
	\node (A) at (0,0) {$\nu_*\O_{\tilde{X}}$};
	\node (B) at (2,0) {$\nu_*\O_{\tilde{X}}^{\times}$};
	\node (C) at (4,0) {$R^1\nu_*\Z$};
	\node (D) at (6.5,0) {$R^1\nu^*\O_{\tilde{X}}=0.$};
	\path[overlay,->>, font=\scriptsize]
	(A) edge node[above]{$\exp$} (B);
	\path[overlay,->, font=\scriptsize]
	(B) edge node[above]{$0$} (C);
	\path[overlay,right hook->, font=\scriptsize]	
	(C) edge (D);
	\end{tikzpicture}
	\end{center}	
	As the fibers are connected, the first map is induced by the exponential map $\O_X\ra\O_X^{\times}$, which is surjective, so the second map is the zero map. This shows that $R^1\nu_*\Z$ injects into $R^1\nu_*\O_{\tilde{X}}$ and hence has to vanish, as klt singularities are rational, \cite[Theorem~5.22]{KM98}. 	Therefore $R^1\nu_*\C=0$ and the first terms of the second page $(E_2,d_2)$ are:
	\begin{center}
			\begin{tikzpicture}[scale=2.0]
			\node (A) at (0,0) {$E^{00}_2$};
			\node (B) at (1,0) {$E^{10}_2$};
			\node (C) at (2,0) {$E^{20}_2$};
			\node (D) at (0,0.5) {$E^{01}_2$};
			\node (E) at (1,0.5) {$E^{11}_2$};
			\node (F) at (2,0.5) {$E^{21}_2$};
			\node (G) at (0,1) {$E^{02}_2$};			
			
			\path[->,font=\scriptsize]
			(D) edge node[pos=0.2,above]{$d_2$} (C)
			(G) edge node[pos=0.2,above]{$d_2$} (F);
			\end{tikzpicture}
			\quad\quad\quad\quad
			\begin{tikzpicture}[scale=2.0]
			\node (A) at (0,0) {$\H^0(X,\,\C)$};
			\node (B) at (1,0) {$\H^1(X,\,\C)$};
			\node (C) at (2,0) {$\H^2(X,\,\C)$};
			\node (D) at (0,0.5) {$0$};
			\node (E) at (1,0.5) {$0$};
			\node (F) at (2,0.5) {$0$};
			\node (G) at (0,1) {$\H^0(X,\,R^2\nu_*\C)$};			
						
			\path[->,font=\scriptsize]
			(D) edge node[pos=0.2,above]{$d_2$} (C)
			(G) edge node[pos=0.2,above]{$d_2$} (F);
			\end{tikzpicture}
\end{center}
	The entries $(0,0),\,(1,0),\,(2,0)$ and the second row degenerate at the second page. We see ${E^{02}_3\cong E^{02}_2=\H^0(X,\,R^2\nu_*\C)}$ and the entry $(0,2)$ degenerates at the fourth page with ${E^{02}_4=\ker\big(d_3\colon E^{02}_2\ra \H^3(X,\,\C)\big)}\subset\H^0(X,\,R^2\nu_*\C)$, which shows ${\H^2(\tilde{X},\,\C)\cong \H^2(X,\,\C)\oplus E^{02}_4}$. This gives the following exact sequence.	
	\begin{equation}
	\begin{gathered}
	\label{lerayseq}
	\begin{tikzpicture}
	\matrix (m) [
	matrix of math nodes,
	row sep=3em,
	column sep=2.5em,
	text height=1.5ex, text depth=0.25ex
	]
	{ 0 & \H^1(X,\,\C) & \H^1(\tilde{X},\,\C) & \H^0(X,\,R^1\nu_*\C) \\
		& \H^2(X,\,\C) & \H^2(\tilde{X},\,\C) & \H^0(X,\,R^2\nu_*\C) \\
              };
              
              \path[overlay,->, font=\scriptsize]
              (m-1-1) edge (m-1-2)
              (m-1-2) edge node[above]{$\nu^*$} (m-1-3)
              (m-1-3) edge node[above]{$\pi_{01}$} (m-1-4)
              (m-1-4) edge[out=355,in=175] node[above]{$d_2$} (m-2-2)
              (m-2-2) edge node[above]{$\nu^*$} (m-2-3)
              (m-2-3) edge node[above]{$\pi_{02}$} (m-2-4);
            \end{tikzpicture}
	\end{gathered}
	\end{equation}
		
	\subsubsection*{Step \eqref{pureness}.} As $\H^0(X,\,R^1\nu_*\C)$ vanishes, sequence~\eqref{lerayseq} shows that ${\nu^*\colon\H^1(X,\,\C)\ra\H^1(\tilde{X},\,\C)}$ is an isomorphism and ${\nu^*\colon\H^2(X,\,\C)\ra\H^2(\tilde{X},\,\C)}$ is injective. They are also morphisms of mixed Hodge structures, which by Deligne are always strict with respect to both filtrations, cf.\,\cite[Lemma~1.13]{GS75}.
	Thus both maps $\nu^*$ induce also in any weight monomorphisms of pure Hodge structures. As the mixed Hodge structure on $\H^p(\tilde{X},\,\C)$ is pure of weight $p$, the same is true for $\H^p(X,\,\C)$ for $p=1,2$.
	
	\subsubsection*{Step \eqref{surj}.}
	We want to prove that the restriction $\nu^*_{20}\colon\H^{2,0}(X)\ra\H^{2,0}(\tilde{X})$ of ${\nu^*\colon\H^2(X,\,\C)\ra\H^2(\tilde{X},\,\C)}$ is surjective. By the exact sequence~\eqref{lerayseq} it is enough to show that $\pi_{02}$ is the zero map on the subspace $\H^{2,0}(\tilde{X})=F^2\H^2(\tilde{X},\,\C)$. First of all, to be able to apply Lemma~\ref{formsonfiber} later, we assume that all reductions of fibers of $\nu$ have only smooth components.
	
	To calculate $\pi_{02}$, we choose by Theorem~\ref{loj} open coverings ${\fU=(U_i)_i}$ of $X$ and ${\fV\defeq (V_i)_i\defeq\big(\nu^{-1}(U_i)\big)_i}$ of $\tilde{X}$, such that for each $i$ we have ${\H^0(U_i,\,R^2\nu_*\C)\cong\H^2(E_i,\,\C)}$ for a fiber $E_i$ of $\nu$ over a point in $U_i$. By choosing the $U_i$ to be contractible and an acyclic covering of $X$, we can calculate $\pi_{02}$ via \v{C}ech cohomology as the following composition.
	\begin{center}
	\begin{tikzpicture}[scale=1.5]
	\node (A) at (1.5,1) {$\H^2(\tilde{X},\,\C)$};
	\node (B) at (1.5,0) {$\prod_i\H^2(E_i,\,\C)$};
	\node (C) at (4,0) {$\prod_i\H^0(U_i,\,R^2\nu_*\C)$};
	\node (D) at (6.5,0) {$\check{\H}^0(X,\,\fU,\,R^2\nu_*\C)$};
	\node (E) at (6.5,1) {$\H^0(X,\,R^2\nu_*\C)$};
	\node (G) at (0,1) {$\gamma$};	
	\node (H) at (0,0) {$(\restr{\gamma}{E_i})_i$};		
	\path[->,font=\scriptsize]
	(A) edge (B)
	(B) edge node[above]{$\sim$} (C)
	(D) edge node[right]{$\wr$} (E)
	(A) edge node[above]{$\pi_{02}$} (E);
	\path[->>,font=\scriptsize]		
	(C) edge (D);
	\path[|->,font=\scriptsize]
	(G) edge (H);
	\end{tikzpicture}
	\end{center}
	By assumption, all components of the reduced fibers $(E_i)_{red}$ are smooth. As the restrictions $\H^2(\tilde{X},\,\C)\ra\H^2(E_i,\,\C)$ are morphisms of Hodge structures, they restrict to maps $F^2\H^2(\tilde{X},\,\C)\ra F^2\H^2(E_i,\,\C)$ that vanish by Lemma~\ref{formsonfiber}. We see that the restriction of $\pi_{02}$ to $\H^{2,0}(\tilde{X})$ factors through the zero map and hence is the zero map itself. We conclude that $\nu^*_{20}$ is surjective. By Hodge symmetry the same holds for $\nu^*_{02}$.
	
\subsubsection*{Step \eqref{nicefibers}.}	
	We use Lemma~\ref{niceresolution} to blow up smooth centers over $\tilde{X}$, until we get a $\tilde{\nu}\colon X'\ra\tilde{X}$, where $X'$ is smooth and the reductions of all fibers of $\mu\defeq\nu\circ\tilde{\nu}\colon X'\ra X$ have only smooth components.
	
	By part~\eqref{inj} pulling back with $\mu,\nu,\tilde{\nu}$ on the $\H^{2,0}$ parts gives injections
	\begin{center}
		\begin{tikzpicture}[scale=1.5]
		\node (A) at (0,0) {$\mu^*_{20}\colon\H^{2,0}(X)$};
		\node (B) at (2,0) {$\H^{2,0}(\tilde{X})$};
		\node (C) at (4,0) {$\H^{2,0}(X').$};
		\path[overlay,right hook->, font=\scriptsize]
		(A) edge node[above]{$\nu_{20}^*$} (B)	
		(B) edge node[above]{$\tilde{\nu}_{20}^*$} (C);
		\end{tikzpicture}
	\end{center}
The map $\mu^*_{20}$ is surjective by part~\eqref{surj}, hence also $\nu_{20}^*$. The surjectivity of $\nu_ {02}^*$ follows analogously.
	
\subsubsection*{Step \eqref{H20H02}.}
	As $X$ is normal, we have $\nu_*\O_{\tilde{X}}\cong\O_X$ by Zariski's Main Theorem, \cite[Corollary~11.4]{Har77}. As klt singularities are rational, the Leray spectral sequence for ${\nu\colon\tilde{X}\ra X}$ and $\O_{\tilde{X}}$ dege\-ne\-rates on $E_2$ with zeros away from the row ${E_2^{p0}=\H^p(X,\,\nu_*\O_{\tilde{X}})}$. This gives us ${\nu^*\colon\H^p(X,\,\nu_*\O_{\tilde{X}})\stackrel{\sim}{\lra}\H^p(\tilde{X},\,\O_{\tilde{X}})}$ for all $p$, cf.\,\cite[III~ex.\,8.1]{Har77}. We also have the isomorphisms ${ext\colon\H^0(X,\,\Omega^{[p]}_X)\stackrel{\sim}{\lra}\H^0(\tilde{X},\,\Omega^p_{\tilde{X}})}$ from the extension theorem and the Dolbeault isomorphisms ${\delta_{p0}\colon\H^0(\tilde{X},\,\Omega^p_{\tilde{X}})\stackrel{\sim}{\lra}\H^{p,0}(\tilde{X})}$, ${\delta_{0p}\colon\H^0(\tilde{X},\,\O_{\tilde{X}})\stackrel{\sim}{\lra}\H^{0,p}(\tilde{X})}$. Hence we can define for $p=1,2$ canonical isomorphisms $\kappa_{p0}$ and $\kappa_{0p}$ as compositions of these isomorphisms.
	\begin{center}
		\begin{tikzpicture}[scale=1.5]
		\node (A) at (0,0.5) {$\kappa_{p0}\colon\H^0(X,\,\Omega^{[p]}_X)$};
		\node (B) at (2.4,0.5) {$\H^0(\tilde{X},\,\Omega^p_{\tilde{X}})$};
		\node (C) at (4.4,0.5) {$\H^{p,0}(\tilde{X})$};
		\node (D) at (6.4,0.5) {$\H^{p,0}(X)$};
		\node (E) at (0,0) {$\kappa_{0p}\colon\H^p(X,\,\O_X)$};
		\node (F) at (2.4,0) {$\H^p(\tilde{X},\,\O_{\tilde{X}})$};
		\node (G) at (4.4,0) {$\H^{0,p}(\tilde{X})$};			
		\node (H) at (6.4,0) {$\H^{0,p}(X)$};	
		\path[->,font=\scriptsize]
		(A) edge node[above]{$ext$} (B)
		(B) edge node[above]{$\delta_{p0}$} (C)
		(C) edge node[above]{$(\nu_{p0}^*)^{-1}$} (D)
		(E) edge node[above]{$\nu^*$} (F)
		(F) edge node[above]{$\delta_{0p}$} (G)
		(G) edge node[above]{$(\nu_{0p}^*)^{-1}$} (H);
		\end{tikzpicture}
	\end{center}\!\!\!\!\!\!\!
\end{proof}	

\subsection{Lefschetz (1,1) Theorem for klt singularities}

 The classical Lefschetz $(1,1)$ Theorem is usually stated for smooth, complex projective varieties, see for example \cite[page~163--164]{GH}. In \cite[Theorem 1.1]{BS00} there is a Lefschetz $(1,1)$ Theorem for any normal, complex projective variety. We give a short proof of the theorem for klt varieties, where we do not need to work with mixed Hodge structures by Theorem~\ref{hdec}.
 
\begin{thm}[Lefschetz~(1,1)~Theorem for klt singularities]
 	\label{lef} Let $X$ be a complex projective variety with at most klt singularities. The integral points in $\H^{1,1}(X)$ are exactly the first Chern classes of line bundles on $X$. In other words, for the canonical map $\iota\colon\H^2(X,\,\Z)\ra\H^2(X,\,\C)$ we get
 	\[
 	\H^{1,1}(X)\cap\img\iota=\img\big(c_1\colon\Pic(X)\ra\H^2(X,\,\Z)\big).
 	\]
\end{thm}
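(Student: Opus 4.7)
The plan is to follow the classical argument via the exponential exact sequence, using Theorem~\ref{hdec} to bypass any explicit work with mixed Hodge structures. First, applying sheaf cohomology to the analytic exponential sequence $0 \to \Z \to \O_X \to \O_X^\times \to 0$ on $X$ produces the exact sequence
\begin{equation*}
\Pic(X) \xrightarrow{c_1} \H^2(X,\Z) \xrightarrow{\varepsilon} \H^2(X,\O_X),
\end{equation*}
so that $\img c_1 = \ker \varepsilon$. It therefore suffices to identify $\ker\varepsilon$ with $\H^{1,1}(X) \cap \img\iota$.

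The heart of the argument is to show that the canonical $\C$-linear map $\varepsilon_\C\colon \H^2(X,\C) \to \H^2(X,\O_X)$ induced by $\C \hookrightarrow \O_X$ agrees, under the isomorphism $\kappa_{02}$ of Theorem~\ref{hdec}, with the projection onto the $\H^{0,2}(X)$-summand of the now pure Hodge decomposition $\H^2(X,\C) = \H^{2,0}(X) \oplus \H^{1,1}(X) \oplus \H^{0,2}(X)$. I would establish this by pulling back along the resolution $\nu\colon\tilde X \to X$: on the smooth $\tilde X$ this is the standard identification of $\H^2(\tilde X, \O_{\tilde X})$ with the $\H^{0,2}(\tilde X)$-component via Dolbeault, and the commutative squares of Theorem~\ref{hdec} transfer this to $X$ using the isomorphisms $\nu^*_{02}$, $\delta_{02}$, and the isomorphism $\nu^*\colon \H^2(X,\O_X) \xrightarrow{\sim} \H^2(\tilde X, \O_{\tilde X})$ that follows from rationality of klt singularities (as used already in Step~(5) of the proof of Theorem~\ref{hdec}).

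Granted this identification, $\ker \varepsilon$ becomes $\iota^{-1}\bigl(\H^{2,0}(X) \oplus \H^{1,1}(X)\bigr) = \iota^{-1}\bigl(F^1\H^2(X,\C)\bigr)$. For any $\alpha \in \H^2(X,\Z)$ the class $\iota(\alpha)$ is real, and Hodge symmetry $\overline{\H^{2,0}(X)} = \H^{0,2}(X)$ forces a real element of $F^1$ to lie in $\H^{1,1}(X)$. Hence $\ker\varepsilon = \H^{1,1}(X) \cap \img\iota$, which combined with the first step proves the theorem.

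The main obstacle is the identification of $\varepsilon_\C$ with the projection onto $\H^{0,2}(X)$. On the smooth resolution this is classical, but transferring the statement to the singular $X$ is exactly where Theorem~\ref{hdec} enters; one needs the compatibility of $\kappa_{02}$ with pullback along $\nu$ and with Dolbeault, which is precisely what the commutative diagrams in that theorem encode. Everything else is a direct application of the exponential sequence and Hodge symmetry.
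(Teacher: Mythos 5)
Your proof is correct, but it takes a genuinely different route from the paper's. The paper argues geometrically on the resolution: it pulls $\alpha$ back to $\tilde{X}$, applies the classical Lefschetz $(1,1)$ Theorem there to obtain a line bundle $\fL$ with $c_1(\fL)=\nu^*\alpha$, shows via Theorem~\ref{loj} and the vanishing $R^1\nu_*\O_{\tilde{X}}=0$ that $\fL$ is trivial on a neighbourhood of every fibre, and then descends it to $\fL'\defeq\nu_*\fL$ on $X$ with $c_1(\fL')=\alpha$, using the injectivity of $\nu^*$ on $\H^2$. You instead run the classical exponential-sequence argument directly on $X$, reducing everything to the identification of $\varepsilon_{\C}\colon\H^2(X,\,\C)\ra\H^2(X,\,\O_X)$ with the projection onto the $\H^{0,2}(X)$-summand; this identification does go through as you indicate, since $\nu^*\circ\varepsilon_{\C}=\tilde{\varepsilon}_{\C}\circ\nu^*$ by functoriality of $\C\hookrightarrow\O$, the map $\nu^*\colon\H^2(X,\,\O_X)\ra\H^2(\tilde{X},\,\O_{\tilde{X}})$ is an isomorphism by rationality of klt singularities, $\nu^*$ respects the (pure) Hodge decompositions with $\nu^*_{02}$ an isomorphism, and $\tilde{\varepsilon}_{\C}$ is classically the Dolbeault projection on the smooth $\tilde{X}$. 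Your route is shorter, avoids Theorem~\ref{loj} and the fibre-triviality and descent steps entirely, and yields the slightly sharper description $\img c_1=\ker\big(\H^2(X,\,\Z)\ra\H^2(X,\,\O_X)\big)=\iota^{-1}\big(\H^{1,1}(X)\big)$, which also accounts for torsion classes killed by $\iota$. What the paper's argument buys in exchange is an explicit geometric construction: every integral $(1,1)$-class is realised as $c_1$ of the pushforward of a line bundle from the resolution, which is information of independent use. Both proofs rest on the same essential inputs, namely the purity and pullback compatibilities of Theorem~\ref{hdec} together with the rationality of klt singularities.
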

\begin{proof}
	We take a resolution $\nu\colon\tilde{X}\ra X$ and consider the cano\-ni\-cal maps $\iota\colon\H^2(X,\,\Z)\ra\H^2(X,\,\C)$, $\tilde{\iota}\colon\H^2(\tilde{X},\,\Z)\ra\H^2(\tilde{X},\,\C)$ that extend the scalars. As $\nu^*\colon\H^2(X,\,\C)\ra\H^2(\tilde{X},\,\C)$ preserves the Hodge decomposition, we get for any class ${\alpha\in\H^{1,1}(X)\cap\img\iota}$ that ${\nu^*\alpha\in\H^{1,1}(\tilde{X})\cap\img\tilde{\iota}}$. Using the classical Lefschetz $(1,1)$-Theorem, we can write $\nu^*\alpha=c_1(\fL)$ for a line bundle $\fL$ on $\tilde{X}$. 
	
	Let $F=\nu^{-1}(p)$ be the reduction of any fiber of $\nu$. Using the commutativity of the diagrams
	
	\begin{center}
	\begin{tikzpicture}[scale=1.5]
	\node (A) at (0,0) {$\{p\}$};
	\node (B) at (1,0) {$X$};
	\node (C) at (0,1) {$F$};
	\node (D) at (1,1) {$\tilde{X}$};

	\path[right hook->,font=\scriptsize]
	(A) edge node[above]{$j$} (B)
	(C) edge node[above]{$i$} (D);
	\path[->,font=\scriptsize]
	(C) edge node[left]{$\restr{\nu}{F}$} (A)
	(D) edge node[right]{$\nu$} (B);
	\end{tikzpicture}
	\quad\quad\quad\quad
	\begin{tikzpicture}[scale=1.5]
	\node (A) at (0,0) {$\H^2(\{p\},\,\C)$};
	\node (B) at (2,0) {$\H^2(X,\,\C)$};
	\node (C) at (0,1) {$\H^2(F,\,\C)$};
	\node (D) at (2,1) {$\H^2(\tilde{X},\,\C)$};
	
	\path[->,font=\scriptsize]
	(B) edge node[above]{$j^*$} (A)
	(D) edge node[above]{$i^*$} (C);
	\path[->,font=\scriptsize]
	(A) edge node[left]{$\big(\restr{\nu}{F}\big)^*$} (C)
	(B) edge node[right]{$\nu^*$} (D);
	\end{tikzpicture}
	\end{center}
	we can calculate in $\H^2(F,\,\C)$:
	\[
	c_1(\restr{\fL}{F})=c_1(i^*\fL)=i^*c_1(\fL)=i^*\nu^*\alpha=\big(\restr{\nu}{F}\big)^*j^*\alpha=0,
	\]
	because $j^*\alpha\in\H^2(\{p\},\,\C)=0$.
	
	We have $R^i\nu_*\O_{\tilde{X}}=0$ for all $i>0$, because $X$ has rational singularities. Hence the exponential sequence yields an isomorphism ${c_1\colon R^1\nu_*\O^{\times}_{\tilde{X}}\stackrel{\sim}{\lra}R^2\nu_*\Z}$, induced by the first Chern class. Restricting $\fL$ to analytic neighborhoods $V$ of $F$ gives a germ in the stalks of these sheaves,
	\begin{align*}
	\big(R^1\nu_*\O^{\times}_{\tilde{X}}\big)_p=\lim_V\H^1(V,\,\O_V^{\times})=\lim_V\Pic(V)\ra&\lim_V\H^2(V,\,\Z)=(R^2\nu_*\Z)_p\\
	\big(\restr{\fL}{V}\big)_V\mapsto&\;\big(c_1(\restr{\fL}{V})\big)_V
	\end{align*}
	where the limits are taken over all $V\defeq\nu^{-1}(U)$ for open analytic neighborhoods $U$ of $p$. Theorem~\ref{loj} yields arbitrarily small analytic neighborhoods $U$ of $p$, such that the restrictions $\H^2(\nu^{-1}(U),\,\Z)\stackrel{\sim}{\lra}\H^2(F,\,\Z)$ are isomorphisms. As $c_1\big(\restr{\fL}{F}\big)$ is trivial, the germ on the right hand side is trivial. Since $c_1$ is an isomorphism on germs, the left hand side has to be trivial as well. In other words, there is a $V=\nu^{-1}(U)$ for an analytic neighborhood $U$ of $p$, such that $\restr{\fL}{V}\cong\O_V$ is trivial.
	
	As the fibers are compact, the transition functions of $\fL$ are constant on the fibers. Consequently, the coherent sheaf $\fL'\defeq\nu_*\fL$ is locally free with the same transition functions as $\fL$. As $\nu$ is generically of degree one, we get $\nu^*\nu_*\fL=\fL$. This gives ${\nu^*c_1(\fL')=c_1(\nu^*\nu_*\fL)=\nu^*\alpha}$. Hence ${c_1(\fL')=\alpha}$, because ${\nu^*\colon\H^2(X,\,\C)\ra \H^2(\tilde{X},\,\C)}$ is injective by Theorem~\ref{hdec}.
\end{proof}	

\subsection{Integration of singular top classes}
\label{integration}
We briefly recall how integrating top singular cohomology classes always makes sense on irreducible, compact, complex varieties and show that it is compatible with pullbacks by bimeromorphic maps. We will use this for our application in Section~\ref{hr}, the Hodge Riemann bilinear relations on klt varieties.

First we consider a smooth, compact, $n$-dimensional, complex manifold $X$ with the canonical orientation given by the complex structure. Integrals over classes in $\H^{2n}(X,\,\Z)$ are defined by integrating their de Rham class over $X$. This is a linear map, which by Poincar\'{e} duality is given by taking the cap product with an element $[X]$ of $\H_{2n}(X,\,\Z)$. More precisely, $[X]$ is the fundamental class of $X$ induced by the canonical orientation of $X$, \cite[page~60f]{GH}.

In general, there is no de Rham isomorphism or Poincar\'{e} duality in the singular case. However, these integrals are generalized by ensuring the existence of a canonical fundamental class.

\begin{lem}[Existence of fundamental classes]
	\label{fundamental}
	Let $X$ be an $n$-dimensional, irreducible, compact, complex variety. Then $\H_{2n}(X,\,\Z)$ and $\H^{2n}(X,\,\Z)$ are both isomorphic to $\Z$ and the cap product induces a perfect pairing
	\begin{center}
		\begin{tikzpicture}[scale=1.5]
		\node (A) at (0,0.5){$\cap\colon\H_{2n}(X,\,\Z)\times\H^{2n}(X,\,\Z)$};
		\node (B) at (3,0.5){$\H_0(X,\Z)\cong\Z$};
		\node (D) at (-0.4,0){$(\sigma$};
		\node (E) at (0.15,0){$,$};
		\node (F) at (0.7,0){$\phi)$};
		\node (G) at (3,0){$\sigma\cap\phi$};
		
		\path[->,font=\scriptsize]
		(A) edge (B);
		\path[|->,font=\scriptsize]		
		(F) edge (G);
		\end{tikzpicture}
	\end{center}
\end{lem}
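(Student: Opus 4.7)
The plan is to endow $X$ with the structure of a compact, oriented, $2n$-dimensional pseudomanifold and then invoke top-degree Poincar\'e--Lefschetz duality. The smooth locus $X_{reg}$ is open and dense, connected (because $X$ is irreducible), and is a real $2n$-manifold carrying a canonical orientation from the complex structure. Its complement $X_{sing}$ is a closed analytic subset of complex dimension at most $n-1$, hence of real dimension at most $2n-2$. By \L ojasiewitcz's triangulation theorem there is a finite triangulation of $X$ with $X_{sing}$ a subcomplex; in particular $X_{sing}$ admits an open regular neighborhood in $X$ whose complement deformation-retracts onto $X_{reg}$.

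First, I would compute $\H_{2n}(X,\Z)$ from the long exact sequence of the pair $(X,X_{sing})$: since $X_{sing}$ has real dimension at most $2n-2$, both $\H_{2n}(X_{sing},\Z)$ and $\H_{2n-1}(X_{sing},\Z)$ vanish, so $\H_{2n}(X,\Z)\cong \H_{2n}(X,X_{sing};\Z)$. Because $X$ is compact and $X_{sing}$ is a closed subcomplex, the right-hand side is canonically isomorphic to the Borel--Moore group $\H^{\mathrm{BM}}_{2n}(X_{reg},\Z)$, which is infinite cyclic and generated by the orientation class of the connected, oriented, real $2n$-manifold $X_{reg}$. An entirely parallel argument with cohomology gives $\H^{2n}(X,\Z)\cong \H^{2n}(X,X_{sing};\Z)\cong \H^{2n}_c(X_{reg},\Z)\cong \Z$. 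I would then define $[X]\in\H_{2n}(X,\Z)$ to be the distinguished generator provided by the complex orientation of $X_{reg}$.

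For the perfect pairing, the idea is to use naturality of the cap product with respect to the inclusion $X_{reg}\hookrightarrow X$: under the identifications above, the pairing $\cap\colon \H_{2n}(X,\Z)\times\H^{2n}(X,\Z)\to \H_0(X,\Z)\cong\Z$ corresponds to the classical top-degree pairing $\H^{\mathrm{BM}}_{2n}(X_{reg},\Z)\times\H^{2n}_c(X_{reg},\Z)\to\H_0(X_{reg},\Z)\cong\Z$, which is perfect by Poincar\'e--Lefschetz duality for the oriented real manifold $X_{reg}$. Tracing through the isomorphisms shows that $\alpha\cap[X]$, for a generator $\alpha$ of $\H^{2n}(X,\Z)$, is sent to the class of any single smooth point of $X$ and therefore generates $\H_0(X,\Z)$.

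I expect the main obstacle to be the identification of the relative singular groups with Borel--Moore homology and compactly supported cohomology of $X_{reg}$. This rests on the tameness of the regular neighborhood of $X_{sing}$ furnished by the \L ojasiewitcz triangulation, together with the standard fact that for a compact CW pair $(X,A)$ the quotient $X/A$ is the one-point compactification of $X\setminus A$. Once this identification is secured, the three claims of the lemma follow directly from classical Poincar\'e--Lefschetz duality for oriented real manifolds in top degree.
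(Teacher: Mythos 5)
Your argument is correct, but it follows a genuinely different route from the paper. The paper triangulates $X$ as a compact $2n$-dimensional pseudomanifold (via \cite[1.1]{GM80}) and then simply quotes the classical result of Seifert--Threlfall that for a closed pseudomanifold both $\H_{2n}$ and $\H^{2n}$ are $\Z$ or $0$ according to orientability, orienting the top simplices coherently by the complex structure on $X_{reg}$; the generator is exhibited directly as the sum of the coherently oriented $2n$-simplices. You instead excise the singular locus: using that $X_{sing}$ has real dimension at most $2n-2$ and is a subcomplex of a \L{}ojasiewitcz triangulation, you identify $\H_{2n}(X,\Z)$ with $\H_{2n}(X,X_{sing};\Z)\cong\H^{\mathrm{BM}}_{2n}(X_{reg},\Z)$ and $\H^{2n}(X,\Z)$ with $\H^{2n}_c(X_{reg},\Z)$, and then invoke Poincar\'e--Lefschetz duality for the connected oriented manifold $X_{reg}$. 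Both proofs rest on the same triangulation input; yours is longer but pays off in two ways: it makes explicit \emph{why} the cap-product pairing is perfect (the paper leaves this implicit in the citation), and it shows directly that the canonical generator of $\H_{2n}(X,\Z)$ corresponds to the orientation class of $X_{reg}$, which is exactly what is used later to get $f_*[X]=[Y]$ in Lemma~\ref{pullbackintegrals}. One small wording slip: the complement of a regular neighborhood of $X_{sing}$ is a compact subset of $X_{reg}$ onto which $X_{reg}$ deformation-retracts, not the other way around; this does not affect the argument, which only needs the tameness of the pair $(X,X_{sing})$ for the excision and one-point-compactification identifications.
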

\begin{proof}
	Recall that the underlying topological space of a complex variety admits a triangulation that endows it with the structure of a $2n$-dimensional
	\emph{pseudomanifold}, \cite[1.1]{GM80}. As $X$ is compact, only finitely many simplices are needed. Like in the smooth case,
	$\H_{2n}(X,\,\Z)$ and $\H^{2n}(X,\,\Z)$ of a closed pseudomanifold $X$ are always isomorphic to $\Z$ or $0$, depending on
	the orientability of $X$, \cite[24]{ST80}. Using the complex structure on the smooth locus, all simplices can be canonically oriented \emph{coherently}, such that $X$ is orientable. The class of the sum over all canonically oriented $2n$-simplices generates $H_{2n}(X,\Z)$.
\end{proof}

\begin{notation}
	On an $n$-dimensional, irreducible, compact, complex variety $X$ we notate the generator of $\H_{2n}(X,\Z)$ that is induced by the complex structure as $[X]$ and call it the \emph{canonical fundamental class} of $X$.
\end{notation}

\begin{defi}[Integration of top classes]
	\label{integral}
	Let $X$ be an $n$-dimensional, irreducible, compact, complex variety with canonical fundamental class $[X]$. The integrals over top singular cohomology classes on $X$ are defined as follows.
	\begin{center}
		\begin{tikzpicture}[scale=1.5]
		\node (A) at (0,0.5){$\H^{2n}(X,\,\C)$};
		\node (B) at (2.5,0.5){$\H_0(X,\C)\cong\C$};
		\node (D) at (0,0){$\alpha$};
		\node (E) at (2.5,0){$\int_X\alpha\defeq[X]\cap\alpha$};
		
		\path[->,font=\scriptsize]
		(A) edge node[above]{$\sim$} (B);
		\path[|->,font=\scriptsize]		
		(D) edge (E);
		\end{tikzpicture}
	\end{center}
\end{defi}

\begin{lem}[Pullbacks of integrals]
	\label{pullbackintegrals}
	If $f\colon X\ra Y$ is a bimeromorphic morphism of irreducible, compact, complex varieties, then $\int_Y\alpha=\int_Xf^*\alpha$ for all
	$\alpha\in\H^{2n}(Y,\,\C)$.
\end{lem}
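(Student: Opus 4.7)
The plan is to reduce the claim to the single identity $f_*[X] = [Y]$ in $\H_{2n}(Y,\,\Z)$, and then to verify it using that $f$ is biholomorphic on a dense open subset.

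First I would apply the projection formula for the cap product in singular (co)homology: for any continuous map $f\colon X\to Y$, any $\sigma\in\H_{2n}(X,\,\C)$ and any $\alpha\in\H^{2n}(Y,\,\C)$, one has
\[
f_*\bigl(\sigma\cap f^*\alpha\bigr) \;=\; (f_*\sigma)\cap\alpha \quad\text{in } \H_0(Y,\,\C).
\]
Since $X$ and $Y$ are irreducible, both zeroth homology groups are canonically $\C$ and $f_*\colon\H_0(X,\,\C)\to\H_0(Y,\,\C)$ is the identity under these identifications. Taking $\sigma=[X]$, the class $\int_X f^*\alpha = [X]\cap f^*\alpha$ pushes forward to $f_*[X]\cap\alpha$, so the lemma reduces to the fundamental-class identity $f_*[X]=[Y]$.

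Next I would prove this identity using the bimeromorphism. There exists a dense open $V\subseteq Y_{reg}$ with $U\defeq f^{-1}(V)\subseteq X_{reg}$ such that $f|_U\colon U\to V$ is a biholomorphism; in particular $f$ maps $X\setminus U$ into $Y\setminus V$. Both complements are closed analytic subsets of real dimension at most $2n-2$, so their singular homology vanishes in degrees $2n-1$ and $2n$. From the long exact sequences of the pairs $(X,X\setminus U)$ and $(Y,Y\setminus V)$ it then follows that restriction induces isomorphisms $\H_{2n}(X,\,\Z)\cong\H_{2n}(X,X\setminus U;\,\Z)$ and $\H_{2n}(Y,\,\Z)\cong\H_{2n}(Y,Y\setminus V;\,\Z)$; by excision these groups coincide with the Borel--Moore groups $\H_{2n}^{BM}(U,\,\Z)$ and $\H_{2n}^{BM}(V,\,\Z)$. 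The biholomorphism $f|_U$ preserves the canonical complex orientations and therefore identifies their Borel--Moore orientation classes. By the description of the canonical fundamental class given in Lemma~\ref{fundamental} as a coherently oriented sum of top simplices, $[X]$ and $[Y]$ restrict to these Borel--Moore generators. A diagram chase through the commutative square of pairs then yields $f_*[X]=[Y]$, which finishes the proof together with the first step.

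I expect the main obstacle to be the bookkeeping that identifies, via excision, the restriction of the canonical fundamental class with the Borel--Moore orientation class on the biholomorphic locus. The underlying topological fact, that singular homology in the top two degrees is insensitive to removing a closed subset of real codimension at least two, is standard for oriented pseudomanifolds but should be spelled out with a triangulation compatible with the subsets $U$ and $V$.
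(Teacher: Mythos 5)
Your proof is correct and follows essentially the same route as the paper: reduce via the projection formula for the cap product to the single identity $f_*[X]=[Y]$, and then verify that identity using the compatibility of $f$ with the complex orientation. The only difference is that the paper disposes of $f_*[X]=[Y]$ in one sentence (``holomorphic maps are $\C$-linear on tangent spaces, hence orientation-compatible''), whereas you carefully justify it by excising the codimension-two loci where $f$ fails to be biholomorphic and comparing Borel--Moore orientation classes --- a legitimate and more detailed elaboration of the same point.
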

\begin{proof}
	Holomorphic maps induce $\C$-linear maps between the tangent spaces. Therefore $f$ is compatible with the complex orientation, $f_*[X]=[Y]$. We apply the projection formula for the cap product:
	\[
	\int_Y\alpha=[Y]\cap\alpha=f_*[X]\cap\alpha=[X]\cap f^*\alpha=\int_Xf^*\alpha.\qedhere
	\]
\end{proof}

\subsection{Hodge-Riemann bilinear relations}
\label{hr}
On an $n$-dimensional, compact Kähler manifold $(X,\omega)$ we have for any $k\in\{0,\ldots,n\}$ the Hodge-Riemann sesquilinear form
\begin{center}
	\begin{tikzpicture}[scale=1.5]
	\node (A) at (0,0.5){$\psi_{X,\omega}\colon\H^k(X,\,\C)\times\H^k(X,\,\C)$};
	\node (B) at (3,0.5){$\C$};
	\node (C) at (-0.25,0){$(v$};
	\node (D) at (0.35,0){$,$};
	\node (F) at (0.95,0){$w)$};
	\node (G) at (4,0){$(-1)^{\frac{k(k-1)}{2}}\cdot\int_X v\wedge\overline{w}\wedge\omega^{n-k}$};
	
	\path[->,font=\scriptsize]
	(A) edge (B);
	\path[|->,font=\scriptsize]		
	(F) edge (G);
	\end{tikzpicture}
\end{center}
satisfying the property that for any $p,q\in\Nn$ with $p+q=k$ the form $i^{p-q}\cdot\psi_{X,\omega}$ is positive definite on the primitive part $\H^{p,q}(X)_P\defeq\{v\in\H^{p,q}(X)\mid v\wedge\omega^{n-k+1}=0\}$, \cite[Proposition 3.3.15]{Huy05}. In other words,
\[
\forall v\in\H^{p,q}(X)_P\setminus\{0\}\colon i^{p-q}\cdot\psi_{X,\omega}(v,v)>0.
\]
These inequalities are called the Hodge-Riemann bilinear relations. The following Lemma~\ref{localhr} will allow us to deduce an analogue in the singular setting. We will apply this in Corollary~\ref{bilrel} in the case of varieties with at most klt singularities.

\begin{lem}[Local Hodge-Riemann bilinear relations]\label{localhr}
	Let $(X,\omega)$ be a smooth, $n$-dimensional Kähler manifold with a closed $(p,q)$-form $\alpha$ and set $k\defeq p+q$. If $\alpha\wedge\omega^{n-k+1}=0$, then the form $i^{p-q}\cdot(-1)^{k(k-1)/2}\cdot\alpha\wedge\overline{\alpha}\wedge\omega^{n-k}$ is a positive volume form on the subset of $X$ where $\alpha$ does not vanish.
\end{lem}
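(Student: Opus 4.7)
The plan is to observe that the claim is fundamentally pointwise: the form $i^{p-q}(-1)^{k(k-1)/2}\alpha\wedge\overline{\alpha}\wedge\omega^{n-k}$ is a real top-degree form on $X$, and it is a positive volume form on the open set $\{x\in X\mid \alpha_x\neq 0\}$ if and only if, at every such $x$, its value at $x$ is a strictly positive multiple of the canonical orientation form induced by the complex structure. So I would fix a point $x$ with $\alpha_x\neq 0$ and work entirely in the Hermitian vector space $(T_xX,\omega_x)$.

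The global hypothesis $\alpha\wedge\omega^{n-k+1}=0$ specialises at $x$ to $\alpha_x\wedge\omega_x^{n-k+1}=0$, which is precisely the statement that $\alpha_x$ is a \emph{primitive} element of $\bigwedge^{p,q}T_x^*X$ with respect to the Lefschetz operator $L=\omega_x\wedge(\cdot)$. Thus the lemma reduces to the following linear-algebra statement, independent of $X$: for any nonzero primitive $(p,q)$-form $\beta$ on an $n$-dimensional Hermitian vector space $V$ with Kähler form $\omega$, the top form $i^{p-q}(-1)^{k(k-1)/2}\beta\wedge\overline{\beta}\wedge\omega^{n-k}$ is a strictly positive multiple of $\omega^n$.

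This linear-algebra fact is precisely the pointwise (un-integrated) content of the Hodge--Riemann relations already quoted. The cleanest route is Weil's identity
\[
*\beta \;=\; \frac{i^{p-q}(-1)^{k(k-1)/2}}{(n-k)!}\,\omega^{n-k}\wedge\beta
\]
for primitive $(p,q)$-forms, combined with the defining property $\beta\wedge\overline{*\beta}=\|\beta\|^2\cdot\omega^n/n!$ of the Hodge inner product; multiplying these together expresses $i^{p-q}(-1)^{k(k-1)/2}\beta\wedge\overline{\beta}\wedge\omega^{n-k}$ as an explicit positive constant times $\|\beta\|^2\cdot\omega^n$. Alternatively, since the already-cited \cite[Proposition~3.3.15]{Huy05} is obtained by integrating this very pointwise identity, one can simply extract its local version from that reference.

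I do not foresee any genuine obstacle here; the only care needed is sign bookkeeping, to verify that the particular combination $i^{p-q}(-1)^{k(k-1)/2}$ appearing in our statement matches the conventions for the Hodge star, the Hermitian inner product on $\bigwedge^{p,q}V^*$, and the canonical orientation of the Kähler manifold used in Section~\ref{integration}. This is routine and contains no real content.
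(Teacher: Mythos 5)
Your proposal is correct and matches the paper's proof in substance: the paper likewise reduces the claim to a pointwise computation at each point where $\alpha$ does not vanish and simply cites \cite[Corollary~1.2.36]{Huy05}, which is exactly the linear-algebra statement about primitive $(p,q)$-forms that you derive via the Weil identity. The only difference is that you sketch the derivation rather than citing it, and your caveat about sign bookkeeping is apt but harmless.
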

\begin{proof}
	This can be checked by a calculation in local coordinates at any point $x\in X$, where $\alpha$ does not vanish, which is done in \cite[Corollary~1.2.36]{Huy05}. The case $k\leq2$, which is actually the interesting case for us, can also be found explicitly in \cite[pages~124--125]{GH}.
\end{proof}

%\begin{rem}[Kähler forms and ample divisors] On a complex, projective variety $X$, any ample divisor $A$ induces a Kähler form $\alpha$ on the smooth locus $X_{reg}$ that defines the same class in $\H^2(X,\C)$ as $c_1\big(\O_X(A)\big)$ via the following construction.

%The linear system $|nA|$ of a very ample multiple of $A$ induces an embedding $i:X\ra\P^r$ into a projective space, such that $nA$ is given as the intersection of a hyperplane $H\subset\P^r$ with $X$. The class $c_1\big(\O_{\P^r}(H)\big)\in\H^2(\P^r,\Z)$ is the class of the Fubini study metric $\omega$ on $\P^r$. The restricted form $\alpha:=\frac{1}{n}i^*\omega|_X$ lies in the class ${c_1\big(\O_X(A)\big)\in\H^2(X,\C)}$ and its restriction to $X_{reg}$ is Kähler.
%\end{rem}1

\begin{notation}
	[Hodge-Riemann sesquilinear form]
	For every $n$-dimensional, irreducible, complex projective variety $X$ any ample class ${a\in\H^2(X,\,\C)}$ induces a sesquilinear form $\psi_{X,a}$, defined by
	\begin{center}
		\begin{tikzpicture}[scale=1.5]
		\node (A) at (0,0.5){$\psi_{X,a}\colon\H^k(X,\,\C)\times\H^k(X,\,\C)$};
		\node (B) at (3,0.5){$\C$};
		\node (C) at (-0.25,0){$(\alpha$};
		\node (D) at (0.35,0){$,$};
		\node (F) at (0.95,0){$\beta)$};
		\node (G) at (4,0){$(-1)^{\frac{k(k-1)}{2}}\cdot\int_X \alpha\cup\overline{\beta}\cup a^{n-k}.$};
		
		\path[->,font=\scriptsize]
		(A) edge (B);
		\path[|->,font=\scriptsize]		
		(F) edge (G);
		\end{tikzpicture}
	\end{center}
\end{notation}

\begin{cor}[Singular bilinear relations]
	\label{singularhr}
	Let $X$ be an $n$-dimensional, irreducible, complex projective variety with a resolution $\nu\colon\tilde{X}\ra X$ and an ample class ${a=c_1\big(\O_X(A)\big)\in\H^2(X,\,\C)}$. Then the bilinear relation $i^{p-q}\cdot\psi_{X,a}(\alpha,\alpha)>0$ holds for any class $\alpha\in\H^k(X,\,\C)\setminus\{0\}$ with $\alpha\cup a^{n-k+1}=0$ and ${\nu^*\alpha\in\H^{p,q}(\tilde{X})}$.
	
\end{cor}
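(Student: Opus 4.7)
The plan is to transfer the inequality from $X$ to the resolution $\tilde X$ via Lemma~\ref{pullbackintegrals}, and then to handle the fact that $\nu^*a$ is only nef (not Kähler) on $\tilde X$ through an approximation by Kähler classes. Pulling back gives
\[
\psi_{X,a}(\alpha,\alpha) \;=\; (-1)^{k(k-1)/2}\int_{\tilde X} \nu^*\alpha \cup \overline{\nu^*\alpha} \cup (\nu^*a)^{n-k},
\]
together with the primitivity relation $\nu^*\alpha\cup(\nu^*a)^{n-k+1}=0$. The class $\nu^*a = c_1(\nu^*\O_X(A))$ is nef and big on $\tilde X$, since $(\nu^*a)^n = a^n = A^n > 0$, so the problem reduces to a Hodge--Riemann type inequality on the smooth projective variety $\tilde X$ for a $(p,q)$-class primitive with respect to the nef and big class $\nu^*a$.

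To apply classical Hodge theory, fix a Kähler form $\tilde\omega$ on $\tilde X$ and a smooth, closed, semipositive $(1,1)$-form $\omega_0$ representing $\nu^*a$ (obtained by pulling back a Fubini--Study form along a projective embedding of $X$ given by a sufficiently divisible multiple of $A$); then $\omega_t := \omega_0 + t\tilde\omega$ is a genuine Kähler form for each $t>0$, with class $\nu^*a + t[\tilde\omega]$. Lefschetz-decompose $\nu^*\alpha$ with respect to $[\omega_t]$ as $\nu^*\alpha = \sum_{j\geq 0}[\omega_t]^j \cup \beta_j^{(t)}$, with primitive components $\beta_j^{(t)}$ of type $(p-j,q-j)$. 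Classical Hodge--Riemann on $(\tilde X,\omega_t)$, itself a consequence of Lemma~\ref{localhr} applied to $\omega_t$-harmonic primitive representatives, expresses $i^{p-q}\psi_{\tilde X,[\omega_t]}(\nu^*\alpha,\nu^*\alpha)$ as an alternating sum $\sum_j (-1)^j c_j \|\beta_j^{(t)}\|^2$ with positive weights $c_j$.

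Finally, let $t\to 0^+$. Since $\nu^*\alpha\cup(\nu^*a)^{n-k+1}=0$, we have $\nu^*\alpha\cup[\omega_t]^{n-k+1} = O(t)$, and the continuous dependence of the Lefschetz decomposition on the Kähler class forces $\beta_j^{(t)}\to 0$ for $j\geq 1$, while $\beta_0^{(t)}\to\nu^*\alpha$. Continuity of the integral in $t$ then yields
\[
i^{p-q}\psi_{X,a}(\alpha,\alpha) \;=\; \lim_{t\to 0^+} i^{p-q}\psi_{\tilde X,[\omega_t]}(\nu^*\alpha,\nu^*\alpha)\;\geq\; 0.
\]
The hard part is upgrading this to a strict inequality: since $\omega_t$ degenerates to the merely nef $\omega_0$ in the limit, the Hodge--Riemann positivity could \emph{a priori} collapse. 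This is precisely where bigness enters; the condition $(\nu^*a)^n > 0$ guarantees that $\omega_0$ is strictly positive on the Zariski open complement $U$ of the $\nu$-exceptional locus, so that Lemma~\ref{localhr}, applied pointwise on $U$ to the limit representative of $\nu^*\alpha$, produces a strictly positive integrand on a set of positive measure, yielding the desired strict inequality $i^{p-q}\psi_{X,a}(\alpha,\alpha) > 0$.
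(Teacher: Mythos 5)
Your opening move --- transferring the integral to $\tilde X$ via Lemma~\ref{pullbackintegrals} and observing that $\nu^*a$ is represented by a form that is Kähler off the $\nu$-exceptional locus --- matches the paper. But the core of your argument, the perturbation $\omega_t=\omega_0+t\tilde\omega$ with a Lefschetz decomposition and a limit $t\to 0^+$, contains a genuine gap and is, moreover, unnecessary. The gap: the claim that ``continuous dependence of the Lefschetz decomposition on the Kähler class forces $\beta_j^{(t)}\to 0$ for $j\ge 1$'' is exactly the delicate point in extending Hodge--Riemann relations to nef classes. The primitive projectors are built from the inverses of the Lefschetz isomorphisms $L_t^{\,\bullet}$, and as $t\to 0^+$ these isomorphisms can degenerate (hard Lefschetz may fail for the merely nef class $\nu^*a$), so the components $\beta_j^{(t)}$ need not stay bounded, let alone tend to $0$; knowing only $\nu^*\alpha\cup[\omega_t]^{n-k+1}=O(t)$ does not control them. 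Even granting the limit, you would only obtain $\ge 0$, and your upgrade to strict positivity invokes ``the limit representative of $\nu^*\alpha$'', which is undefined: the $\omega_t$-harmonic representatives need not converge, and to feed Lemma~\ref{localhr} you must actually exhibit a closed $(p,q)$-form representing $\nu^*\alpha$ that is \emph{pointwise} primitive with respect to $\omega_0$ on the non-exceptional locus and nonvanishing on a set of positive measure. None of this is established. (A small misattribution besides: $\omega_0$ is Kähler off the exceptional locus because $\nu$ is an immersion there and $A$ is ample, not because $(\nu^*a)^n>0$.)

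The paper's proof shows why the detour is not needed: Lemma~\ref{localhr} is already a pointwise statement, so one applies it directly to representatives of $\nu^*\alpha$ and $\nu^*a$ on the open set where $\nu^*a$ is represented by a Kähler form. There the integrand $i^{p-q}(-1)^{k(k-1)/2}\,\nu^*\alpha\wedge\overline{\nu^*\alpha}\wedge(\nu^*a)^{n-k}$ is a positive volume form wherever the representative of $\nu^*\alpha$ does not vanish, and since it is nonnegative on a dense open set and positive on a nonempty open subset, the integral --- which depends only on the cohomology classes --- is strictly positive. Nonnegativity and strictness come in one stroke, with no approximation by Kähler classes and no Lefschetz decomposition. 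If you want to keep your approach, you would need to either prove the convergence of the primitive decomposition as $t\to 0^+$ (essentially the nef-and-big Hodge--Riemann theorem) or abandon the limit and run the pointwise argument directly, at which point your proof becomes the paper's.
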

\begin{proof}
	By Lemma~\ref{pullbackintegrals} we have
	\begin{align*}
		\psi_{X,a}(\alpha,\alpha)&= (-1)^{\frac{k(k-1)}{2}}\cdot\int_X \alpha\cup\overline{\alpha}\cup a^{n-k}\\
		&=(-1)^{\frac{k(k-1)}{2}}\cdot\int_{\tilde{X}} \nu^*\alpha\cup\overline{\nu^*\alpha}\cup (\nu^*a)^{n-k}.
	\end{align*}
	As $\nu^*A$ is ample away from the exceptional locus $E$ of $\nu$, the class $\nu^*a$ is locally represented by a closed $(1,1)$-form that is Kähler away from the exceptional locus. By Lemma~\ref{localhr}, the class $i^{p-q}\cdot(-1)^{k(k-1)/2}\cdot\nu^*\alpha\cup\overline{\nu^*\alpha}\cup (\nu^*a)^{n-k}$ is away from $E$ and the vanishing locus of $\alpha$ locally represented by a positive volume form. Hence for $\alpha\neq0$ it is positive on a dense open subset, so it has a positive integral. The integral is finite by the definition of $\psi_{X,a}$.
\end{proof}

If we have only a mixed Hodge structure on $\H^k(X,\,\C)$, the condition ${\nu^*\alpha\in\H^{p,q}(\tilde{X})}$ is not convenient. If $X$ has at most klt singularities, we can at least make use of the Hodge decompositions of $\H^1(X,\,\C)$ and $\H^2(X,\,\C)$ from Theorem~\ref{hdec}.

\begin{cor}[Bilinear relations on klt varieties]
	\label{bilrel}
	Let $X$ be an $n$-dimensional, complex projective variety with at most klt singularities and an ample class ${a\in\H^2(X,\,\C)}$. Then the bilinear relation $i^{p-q}\cdot\psi_{X,a}(\alpha,\alpha)>0$ holds for any non-zero $\alpha$ with ${\alpha\cup a^{n-k+1}=0}$ that can be written as the cup-product of classes ${\alpha_1,\ldots,\alpha_r\in\H^1(X,\,\C)\cup\H^2(X,\,\C)}$.
	
	We write $p\defeq\sum p_j$, $q\defeq\sum q_j$ and $k\defeq p+q$ where $(p_j,q_j)$ are such that the $\alpha_j$ lie in the $\H^{p_j,q_j}(X)$-part of the Hodge decompostion of $\H^1(X,\,\C)$ or $\H^2(X,\,\C)$.
\end{cor}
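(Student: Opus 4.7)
The plan is to reduce the statement to the Singular bilinear relations of Corollary~\ref{singularhr}. That result requires three ingredients for our class $\alpha$: that $\alpha \cup a^{n-k+1} = 0$, which is given by hypothesis; that $\alpha \neq 0$, which is also given; and that $\nu^*\alpha$ lies in the pure piece $\H^{p,q}(\tilde{X})$ of the Hodge decomposition on the smooth resolution. Only the last point needs work, and Theorem~\ref{hdec} is exactly designed to produce it.

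First, I would note that since pullback along $\nu\colon\tilde{X}\to X$ is a ring homomorphism on singular cohomology,
\[
\nu^*\alpha = \nu^*\alpha_1 \cup \cdots \cup \nu^*\alpha_r.
\]
Next, for each $j$ the class $\alpha_j$ lies in $\H^1(X,\C)$ or $\H^2(X,\C)$, and by Theorem~\ref{hdec} these cohomology groups carry pure Hodge structures whose decomposition is preserved by $\nu^*$ (the restricted maps $\nu^*_{ab}$ are injective morphisms of pure Hodge structures). Therefore $\nu^*\alpha_j \in \H^{p_j,q_j}(\tilde{X})$ for each $j$.

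Now I would invoke the fact that the Hodge decomposition on the smooth projective variety $\tilde{X}$ is compatible with the cup product, i.e.\ $\H^{p_j,q_j}(\tilde{X}) \cup \H^{p_{j+1},q_{j+1}}(\tilde{X}) \subset \H^{p_j+p_{j+1},q_j+q_{j+1}}(\tilde{X})$. Iterating this gives
\[
\nu^*\alpha \;=\; \nu^*\alpha_1 \cup \cdots \cup \nu^*\alpha_r \;\in\; \H^{p,q}(\tilde{X}),
\]
where $p = \sum p_j$ and $q = \sum q_j$ as in the statement. With this the hypothesis of Corollary~\ref{singularhr} is met, and the desired inequality $i^{p-q}\cdot\psi_{X,a}(\alpha,\alpha) > 0$ follows immediately.

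I do not expect a genuine obstacle here: the whole content has been packaged into Theorem~\ref{hdec} (purity plus compatibility of $\nu^*$ with the Hodge decomposition in degrees one and two) and Corollary~\ref{singularhr} (the positivity statement pulled back to the resolution). The only subtlety worth flagging is to make sure one is allowed to multiply these Hodge components on $\tilde{X}$ rather than on $X$; this is why one passes to $\nu^*\alpha$ before invoking Hodge-type compatibility of the cup product, since on the possibly singular $X$ we only have a Hodge decomposition of $\H^1$ and $\H^2$ individually, not of their products.
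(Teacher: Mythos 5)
Your proposal is correct and follows exactly the route the paper takes: reduce to Corollary~\ref{singularhr} by checking that $\nu^*\alpha\in\H^{p,q}(\tilde X)$, using that $\nu^*$ is a ring homomorphism, that Theorem~\ref{hdec} places each $\nu^*\alpha_j$ in $\H^{p_j,q_j}(\tilde X)$, and that cup product on the smooth $\tilde X$ respects the Hodge decomposition. The paper's proof is just a one-line version of this same argument; your write-up supplies the details it leaves implicit.
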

\begin{proof}
	If $\nu\colon\tilde{X}\ra X$ is a resolution of $X$, the class $\nu^*\alpha$ lies in $\H^{p,q}(\tilde{X})$. Thus Corollary~\ref{bilrel} follows from Corollary~\ref{singularhr}.	
\end{proof}

\bibstyle{alpha}
\bibliographystyle{alpha}
\bibliography{general}

\end{document}